\numberwithin{equation}{section}
\newtheorem{theorem}{Theorem}[section]
\newtheorem{lemma}[theorem]{Lemma}
\newtheorem{proposition}[theorem]{Proposition}
\newtheorem{corollary}[theorem]{Corollary}
\newtheorem{problem}[theorem]{Problem}
\theoremstyle{definition}
\theoremstyle{remark}
\newtheorem{remark}[theorem]{Remark}
\newtheorem{fact}[theorem]{Fact}
\newtheorem{example}[theorem]{Example}
\newtheorem{observation}[theorem]{Observation}
\newtheorem{discussion}[theorem]{Discussion}
\newtheorem{question}[theorem]{Question}
\newcommand{\grade}{\operatorname{grade}}
\newcommand{\Tr}{\operatorname{Tr}}
\newcommand{\cd}{\operatorname{cd}}
\newcommand{\AB}{\operatorname{AB}}
\newcommand{\Spec}{\operatorname{Spec}}
\newcommand{\Ht}{\operatorname{ht}}
\newcommand{\pd}{\operatorname{pd}}
\newcommand{\iext}{\operatorname{Ext-index}}
\newcommand{\itor}{\operatorname{Tor-index}}
\newcommand{\Syz}{\operatorname{Syz}}
\newcommand{\id}{\operatorname{id}}
\newcommand{\Ext}{\operatorname{Ext}}
\newcommand{\up}[1]{{{}^{#1}\!}}
\newcommand{\Tor}{\operatorname{Tor}}
\newcommand{\Hom}{\operatorname{Hom}}
\newcommand{\Ann}{\operatorname{Ann}}
\newcommand{\depth}{\operatorname{depth}}
\newcommand{\vpl}{\operatornamewithlimits{\varprojlim}}
\newcommand{\vil}{\operatornamewithlimits{\varinjlim}}
\newcommand{\lo}{\longrightarrow}
\newcommand{\fm}{\frak{m}}
\newcommand{\fp}{\frak{p}}
\newcommand{\fn}{\frak{n}}
\begin{document}

\author[]{Mohsen Asgharzadeh}

\address{}
\email{mohsenasgharzadeh@gmail.com}

\title[ ]
{A note on cohomological vanishing theorems}

\subjclass[2010]{ Primary 13D45.}
\keywords{Complete-intersection; $\AB$-rings; local cohomology; Frobenius map; Gorenstein rings; vanishing of Ext.}

\begin{abstract} 
We study $ \cd(M,N):=\sup\{j:H^j_{\fm}(M,N)\neq0\}$, and we prove the following
over $\AB$-rings: 
  $\cd(M,N)<\infty \Rightarrow \cd(M,N)\leq2\dim R$. For   locally free over the punctured spectrum, we present the better  bound, namely $\cd(M,N)<\infty \Rightarrow \cd(M,N)\leq\dim R,$ 
and show this is sharp  for maximal Cohen-Macaulay, and prove that this detects freeness of $M$.
We present some explicit examples to compute $\cd(M,N)$. Now, suppose $R$ is only   Cohen-Macaulay and of prime characteristic equipped with the Frobenius map $\varphi$. We show for some $n\gg 0$ that  $\cd(\up{\varphi_n}R,M)<\infty$ iff   $\id_R(M)<\infty.$ This presents some criteria on regularity. Also, some vanishing results on $\Ext^i_R(\up{\varphi}R,-)$ are given,
where $(-)\in\{R,\up{\varphi}R\}$. We determine conditions under which the vanishing $\Ext^i_R(\up{\varphi}R,-)$ of restricted many $i$-th,  implies the vanishing of all.
\end{abstract}

\maketitle

\section{Introduction}

The Grothendieck's vanishing theorem says that
$H^i_{\fm}(M)=0$ for all $j> d:=\dim R$ where $(R,\fm,k)$
is noetherian and local. As soon as the ring is not regular
we see $H^\ast_{\fm}(k,k)\neq0$. Also, it   is well-known 
that $H^j_{\fm}(M,N)=0$ for all $j> d$ provided $\pd(M)<\infty$
or $\id(N)<\infty$ or even some modifications of these, see e.g. 
 \cite{HZ}.
Despite a lot of works in this area, and up to my knowledge, there is not so data on the following basic problem:
\begin{problem}
 What is $\cd(M,N):=\sup\{j:H^j_{\fm}(M,N)\neq0\}$?
\end{problem} 

And one may ask even more; what is the significance of $\cd(M,N)$? 
Towards the symmetry in the vanishing of $\Ext^i_R(-,+)$, for finitely generated modules over local Gorenstein rings, Huneke and Jorgensen \cite{AB} introduced 
``$\AB$-rings". This class of rings lives between complete-intersection and Gorenstein rings. We refer the reader to see \cite{AB} for the systematic study of $\AB$-rings.
Now, let $R$ be an $\AB$ ring. In Section 2 we prove the following:

\begin{enumerate}[(a)]
	\item  $\cd(M,N)<\infty \Rightarrow \cd(M,N)\leq2\dim R$.
	 
	\item  For $M$ be  locally free over the punctured spectrum, we have $$\cd(M,N)<\infty \Rightarrow  \cd(M,N)\leq\dim R.$$
	\item   For $M$ be maximal Cohen-Macaulay and  locally free over the punctured spectrum, we have $$\cd(M,M)<\infty\Longleftrightarrow \emph{M  is free},$$when $\dim R>1$.  In particular, $\cd(M,M)=\dim(R)$.
	\item For $M,N$ be maximal Cohen-Macaulay and  locally free over the punctured spectrum, we have  $\cd(M,N)=\cd(N,M)$.
   \end{enumerate}
The above item  (c) may be considered as the higher-dimensional version of recent work 
\cite[Corollary 4.12]{HM} with a connection to the famous  conjecture of Huneke and Wiegand. From now on $\varphi:R\to R$ denotes the \emph{Frobenius endomorphism}. In the sequel, we will see how that the above computations over $\AB$-rings motive us to prove the following result over the much larger class of rings: 
\begin{enumerate}
\item[(e)]	Let $R$ be a Cohen-Macaulay, F-finite and $n\gg 0$. Then $$\cd(\up{\varphi_n}R,M)<\infty\Longleftrightarrow  \id_R(M)<\infty.$$ 
\end{enumerate}Recall that $\id_R(-)$ denotes the injective dimension of $(-)$. Now, let $m>0$ be any integer and $R$ be   Cohen-Macaulay. Part (e) implies that $\cd(\up{\varphi_n}R,\up{\varphi_m}R)<\infty$ iff $R$  is regular. 
We also present the dual parallel to $(e)$, where  we do not assume  F-finiteness, see Proposition \ref{df}.
Here, we do not care about $n\gg 0$, provided the ring is complete-intersection, and even in the particular case $R$ is normal and locally $\AB$. 

To prove these results, we reduce the local cohomology problems to the vanishing of certain ext-modules. In Section 3 we continue with the later: \begin{problem} Let $-\in\{\up{\varphi_n}R,R\}$ and $i>0$.
 	When is $\Ext^i_R(\up{\varphi_n}R,-)=0$?
 \end{problem} The history of this comes back to \cite{hhe}, where  Herzog assumed and studied vanishing of all Ext-modules, see \cite{hhe}. It seems this is more subtle than the classical dual results   \cite{PS,Ku}. So, to the end of introduction, we assume the rings are F-finite.
In particular, we deal with the higher (lower) analogue of the recent work \cite[1.4]{two}. This naturally leads us to study the  reflexivity problem of $\up{\varphi_n}R$.  We determine conditions under which the vanishing of few many $i$, or  even a single value of $i$, $\Ext^i_R(\up{\varphi_n}R,-)$ implies the vanishing of all. Here, we summarized some of them:

\begin{enumerate}[(i)]
	\item  Suppose $\depth(R)=0$. Suppose
	$\Ext^1_R(\up{\varphi_n}R,\up{\varphi_n}R)=0$ for $n\gg 0$, then $R$ is a field.
	
	\item  	Suppose $R$ is complete intersection and
$\Ext^1_R(\up{\varphi_n}R,\up{\varphi_n}R)=0$  for some $n> 0$, then $R$ is regular.
	\item   Suppose $R$ is normal Cohen-Macaulay. Then $R$ is Gorenstein iff  $\up{\varphi_n}R$ is self-dual and $\Ext^{i}_R(\up{\varphi_n}R,R) = 0$ for $0<i\in\{d-1, d\}$.
	\item[(iv)]  We study the questions asked in \cite[Page 167]{mar} and  \cite[2.12]{l}, we show they are equivalent and present some results, both in positive and negative sides.
\end{enumerate}

Let us close the introduction, by recalling the relation between $\Ext^i_R(\up{\varphi}R,\omega_R)$ and the usual local cohomology by means of Grothendieck's local duality.

\section{When is $H_m^+(M,N)=0$? }
In this note $(R,\fm,k)$
is a commutative  noetherian and local ring, and all modules are finitely generated, otherwise socialized. 
For two $R$-modules $M$ and
$N$, the $i^{th}$ generalized local cohomology of $M$ and $N$  is defined by
$H^{i}_{\fm}(M,N):=\underset{n}{\varinjlim}\Ext^{i}_{R} (M/\fm^{n}M,
N)$.
\begin{proposition}\label{c}
Let $R$ be an $\AB$ ring, let $M$ and $N$ be finitely generated $R$-modules. If $\cd(M,N)<\infty$ then  	$\cd(M,N)\leq2\dim R$.
	\end{proposition}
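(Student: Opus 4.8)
The plan is to compute $H^{\bullet}_{\fm}(M,N)$ through the Grothendieck spectral sequence
\[
E_2^{p,q}=H^p_{\fm}\big(\Ext^q_R(M,N)\big)\Longrightarrow H^{p+q}_{\fm}(M,N),
\]
and to bound its two axes separately. Along the $p$-axis one gets, for free, Grothendieck's vanishing theorem: $E_2^{p,q}=0$ whenever $p>d:=\dim R$. Hence a term contributing to $H^{j}_{\fm}(M,N)$ in total degree $j=p+q$ can survive only if $p\le d$, so the whole problem reduces to bounding the $q$-axis, i.e.\ to proving that the hypothesis $\cd(M,N)<\infty$ already forces $\Ext^{q}_R(M,N)=0$ for $q>d$. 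Since all functors in sight commute with the faithfully flat extension $R\to\widehat R$, and since both Gorensteinness and the $\AB$ property pass to the completion, I would first replace $R$ by $\widehat R$ and assume $R$ complete; being $\AB$, it is then in particular Gorenstein, with dualizing complex $R[d]$.

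The heart of the argument is to extract eventual vanishing of $\Ext$ from finiteness of $\cd$, and here I would use local duality. Writing $C:=\R\Hom_R(M,N)$ and $(-)^\vee:=\Hom_R(-,E_R(k))$, Grothendieck local duality over the complete Gorenstein ring $R$ yields
\[
H^{j}_{\fm}(M,N)^\vee\;\cong\;H^{d-j}\big(\R\Hom_R(C,R)\big).
\]
Thus $\cd(M,N)<\infty$ says exactly that $H^{n}(\R\Hom_R(C,R))=0$ for $n\ll 0$, i.e.\ that $\R\Hom_R(C,R)$ is cohomologically bounded below. Since $C$ is bounded below and $R$ has finite injective dimension $d$, the complex $\R\Hom_R(C,R)$ is automatically bounded above, hence cohomologically bounded. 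Now I invoke biduality: over a ring with a dualizing complex the functor $\R\Hom_R(-,R)$ is a duality, so $C\cong\R\Hom_R(\R\Hom_R(C,R),R)$, and the right-hand side is bounded because $\R\Hom_R(C,R)$ is. Therefore $C=\R\Hom_R(M,N)$ has bounded cohomology, which is precisely the assertion that $\Ext^{q}_R(M,N)=0$ for $q\gg0$.

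With eventual vanishing in hand, the $\AB$ hypothesis performs the final upgrade: over an $\AB$ ring the vanishing of $\Ext^{q}_R(M,N)$ for $q\gg0$ forces its vanishing for all $q>d$. Concretely, for $q>d=\id_R R$ ordinary Ext agrees with Tate cohomology $\widehat{\Ext}^{q}_R(M,N)$, and the defining property of $\AB$-rings is precisely that eventually-vanishing Tate cohomology vanishes identically. Feeding $\Ext^{q}_R(M,N)=0$ for $q>d$ back into the spectral sequence, a nonzero $E_2^{p,q}$ now requires both $p\le d$ and $q\le d$, whence $H^{j}_{\fm}(M,N)=0$ for $j=p+q>2d$ and $\cd(M,N)\le 2d$. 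The step I expect to be most delicate is the local-duality reduction of the second paragraph: one must check that the biduality isomorphism is legitimate for the a priori unbounded-above complex $C$ (using that $R$, being complete Gorenstein, carries a genuine dualizing complex of finite injective dimension, so that biduality holds for complexes with finitely generated, bounded-below cohomology), and that completion truly preserves the $\AB$ property and commutes with the generalized local cohomology of finitely generated modules; once these points are secured, the two numerical bounds $p\le d$ and $q\le d$ combine at once.
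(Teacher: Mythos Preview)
Your argument is correct and takes a genuinely different route from the paper's. The paper first reduces to the case where $N$ is maximal Cohen--Macaulay by repeatedly passing to syzygies (using that $R$, being Gorenstein, has $\id_R R<\infty$), and then invokes the concrete isomorphism $H^{d+i}_{\fm}(M,N)^\vee\cong\Tor^R_i(M,N^\ast)$ from \cite[Lemma 3.4]{HZ}; finiteness of $\cd$ then gives eventual $\Tor$-vanishing, and the bound $\cd\le 2d$ comes from the fact that the \emph{Tor}-index of an $\AB$-ring equals $d$. You instead stay with $N$ arbitrary, use the hypercohomology spectral sequence $H^p_{\fm}(\Ext^q_R(M,N))\Rightarrow H^{p+q}_{\fm}(M,N)$ to separate the two axes, and extract eventual \emph{Ext}-vanishing from $\cd<\infty$ via derived local duality together with biduality for $C=\R\Hom_R(M,N)\in D^+_f(R)$; the $\AB$ hypothesis then enters through the \emph{Ext}-index. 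Both proofs ultimately rest on the same numerical fact (index $=d$) from \cite{AB}, but yours trades the syzygy reduction and the citation of \cite{HZ} for derived-category machinery. The delicate biduality step you flag does go through: representing $C$ by a bounded-below complex $F^\bullet$ of finitely generated free modules, one has $\R\Hom_R(F^\bullet,R)\simeq(F^\bullet)^\ast$ (the rows of the relevant double complex have length $\le d+1$), and $(F^\bullet)^\ast$ is a bounded-above complex of projectives, hence K-projective, so $\R\Hom_R((F^\bullet)^\ast,R)\simeq(F^\bullet)^{\ast\ast}\cong F^\bullet$. One small wrinkle: rather than asserting that the $\AB$ property passes to the completion (which is not entirely obvious), it is cleaner to run local duality and biduality over $\widehat R$ to obtain $\Ext^{\gg0}_{\widehat R}(\widehat M,\widehat N)=0$, transfer this back to $R$ by faithful flatness, apply the $\AB$ property of $R$ itself to get $\Ext^{>d}_R(M,N)=0$, and then conclude via the spectral sequence.
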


\begin{proof}We may assume the ring is complete. We look at the short exact sequence $$0\lo N_1:=\Syz_1(N)\lo F\lo N\lo 0$$ where $F$ is free. This gives the exact sequence $$H^{i-1}_{\fm}(M,N)\lo H^j_{\fm}(M,N_1)\lo H^j_{\fm}(M,F)\quad(+)$$Recall that $\AB$-rings are Gorenstein, so the injective dimension of $F$ is finite. From $(+)$ we see $\cd(M,N_1)<\infty$.
Let $d:=\dim(R)$ and set $N_d:=\Syz_1(N)$. By repetition of this, we see $\cd(M,N_d)<\infty$. Suppose
the desired claim  holds with the additional  Cohen-Macaulay assumption of $N$. Since
$N_d$ is maximal Cohen-Macaulay we obtain that 
$\cd(M,N_d)\leq2d$. Let $j>2d$, and look at $$0=H^{j}_{\fm}(M,F)\lo H^j_{\fm}(M,N_{d-1})\lo H^{j+1}_{\fm}(M,N_d),$$we obtain that 
$H^j_{\fm}(M,N_{d-1})=0$. By repetition of this, we see $\cd(M,N)\leq2 d$. So, without loss of generality, we may and do assume that $N$ is Cohen-Macaulay. This allows us to apply \cite[Lemma 3.4]{HZ}
and to deduce for any $i \geq 0$ that  $$H^{d+i}_{\fm}(M,N)^{ \vee}\cong \Tor^R_i(M,H^{d}_{\fm}(R,N)^{ \vee})\cong \Tor^R_i(M,N^\ast).$$Here $(-)^{ \vee}$ is the Matlis functor. So, $\Tor^R_i(M,N^\ast)=0$ for all $i\gg 0$. But, tor-index of $\AB$-rings is   finite, and in fact  is equal to dimension of $R$ (see discussion after than \cite[Proposition 3.2]{AB}). Let $$\Sigma:=\{(L_1,L_2):\Tor^R_i(L_1,L_2)=0\emph{  for all } i\gg 0  \}.$$ By definition, tor-index is $$\sup\{n:\Tor^R_n(L_1,L_2)\neq 0\emph{ and }\Tor^R_{>n}(L_1,L_2)=0\}_{(L_1,L_2)\in\Sigma}.$$Let $j:=\max\{d,\cd(M,N)\}$. There is nothing to prove 
if $\cd(M,N)<d$, so we may assume that $i:=\cd(M,N)-d\geq 0$. In sum, $(M,N^\ast)\in\Sigma$ and that $$\Tor^R_i(M,N^\ast)\neq 0\emph{ and }\Tor^R_{>i}(M,N^\ast)=0.$$By definition, $$\cd(M,N)-\dim R=i\leq \itor(R)=\dim R,$$and so $\cd(M,N)\leq2\dim R$.
	\end{proof}

\begin{proposition}\label{d}
	Let $R$ be an $\AB$ ring, let $M$ and $N$ be finitely generated $R$-modules so that $M$ is locally free over the punctured spectrum. If $\cd(M,N)<\infty$ then  	$\cd(M,N)\leq\dim R$.
\end{proposition}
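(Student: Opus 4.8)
The plan is to bypass the syzygy reduction of Proposition \ref{c} and instead exploit the hypothesis on $M$ directly, through the change-of-rings spectral sequence for generalized local cohomology. Recall (see \cite{HZ}) that for finitely generated $M,N$ there is a convergent spectral sequence
$$E_2^{p,q}=H^p_{\fm}(\Ext^q_R(M,N))\ \Longrightarrow\ H^{p+q}_{\fm}(M,N),$$
arising from the composite functor $\Gamma_{\fm}\circ\Hom_R(M,-)$. First I would record the effect of the locally-free hypothesis: since $M_{\fp}$ is $R_{\fp}$-free for every $\fp\neq\fm$, we have $\Ext^q_R(M,N)_{\fp}\cong\Ext^q_{R_{\fp}}(M_{\fp},N_{\fp})=0$ for all $q\geq 1$ and all $\fp\neq\fm$. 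Hence $\Supp\Ext^q_R(M,N)\subseteq\{\fm\}$ and each such module has finite length for $q\ge 1$.

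Consequently the $E_2$-page is very thin. For $q\geq 1$ the module $\Ext^q_R(M,N)$ is $\fm$-torsion, so $E_2^{p,q}=0$ for $p\geq 1$ while $E_2^{0,q}=\Ext^q_R(M,N)$; and along the bottom row $E_2^{p,0}=H^p_{\fm}(\Hom_R(M,N))$ vanishes for $p>d:=\dim R$ by Grothendieck vanishing. Thus for $n>d$ the only term on the antidiagonal $p+q=n$ that can survive is $E_2^{0,n}=\Ext^n_R(M,N)$. I would then check that this term is untouched by differentials: it receives none, as the sources lie in negative columns, and its only possible outgoing differential $d_{n+1}\colon E_{n+1}^{0,n}\to E_{n+1}^{n+1,0}$ lands in $H^{n+1}_{\fm}(\Hom_R(M,N))=0$ since $n+1>d$. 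Therefore $E_\infty^{0,n}=\Ext^n_R(M,N)$, and the spectral sequence yields
$$H^n_{\fm}(M,N)\cong\Ext^n_R(M,N)\qquad\text{for all }n>d.$$

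With this identification the proposition becomes immediate from the $\AB$ hypothesis. If $\cd(M,N)<\infty$ then $H^n_{\fm}(M,N)=0$, hence $\Ext^n_R(M,N)=0$, for all $n\gg 0$; since the Ext-index of an $\AB$ ring is finite and equals $\dim R$ (the Ext-analogue of the Tor-index fact used in Proposition \ref{c}, \cite{AB}), vanishing of $\Ext^n_R(M,N)$ for $n\gg0$ forces $\Ext^n_R(M,N)=0$ for every $n>d$. Feeding this back through the isomorphism gives $H^n_{\fm}(M,N)=0$ for all $n>d$, that is, $\cd(M,N)\leq\dim R$.

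The step I expect to require the most care is the bookkeeping on the spectral sequence: confirming that the finite-length modules $\Ext^{q}_R(M,N)$ with $q\ge1$ really do collapse onto the single column $p=0$, and that no nonzero differential can connect this column to a surviving bottom-row term in total degree $>d$. This is precisely where the hypothesis that $M$ is locally free on the punctured spectrum does all the work. A secondary point is to pin down the exact value of the $\AB$ Ext-index: one must know it is bounded by $\dim R$, not merely finite, for otherwise the argument only delivers $\cd(M,N)\le\iext(R)$. An alternative, in the spirit of Proposition \ref{c}, would be to reduce to $N$ Cohen--Macaulay and argue that the finite-length modules $\Tor^R_i(M,N^{\ast})$ vanish for $1\le i\le d$; but this route needs genuine rigidity of $\Tor$ rather than the mere index bound, so the spectral-sequence argument above is the cleaner choice.
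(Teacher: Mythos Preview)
Your proof is correct, and it runs on the same engine as the paper's---the Ext-index bound $\iext(R)=\dim R$ for $\AB$ rings---but reaches it by a different and somewhat cleaner path. The paper first reduces to the case where $N$ is maximal Cohen--Macaulay (via the syzygy trick from Proposition~\ref{c}) and then quotes \cite[Lemma~3.4]{HZ} together with \cite[Lemma~3.5(2)]{ta} to obtain, for $i\ge 0$, isomorphisms $H^{d+i}_{\fm}(M,N)^{\vee}\cong \Tor^R_i(M,N^{\ast})\cong \Ext^{d+i}_R(M,N)^{\vee}$; the bound then follows from the Ext-index. You instead extract the identification $H^{n}_{\fm}(M,N)\cong\Ext^{n}_R(M,N)$ for $n>d$ directly from the Grothendieck spectral sequence, using only that $\Ext^{q}_R(M,N)$ is $\fm$-primary for $q\ge 1$. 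This avoids both the Cohen--Macaulay reduction on $N$ and the external citation to \cite{ta}, at the cost of a short spectral-sequence bookkeeping check which you carry out correctly (the outgoing differentials $d_r$ from $E_r^{0,n}$ land in $E_r^{r,n-r+1}$, zero for $2\le r\le n$ by the finite-length collapse and zero for $r=n+1$ by Grothendieck vanishing). What the paper's route buys is parallelism with Proposition~\ref{c} and reuse of the $\Tor$ isomorphism needed later; what yours buys is self-containment and no hypothesis on $N$ beyond finite generation.
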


\begin{proof}We may assume the ring is complete. Without loss of generality, we may and do assume that $N$ is Cohen-Macaulay. Suppose $\cd(M,N)\geq d:=\dim (R)$, and let $i:=\cd(M,N)-d\geq 0$. We show that $i=0$. Indeed, 
recall that $$H^{d+i}_{\fm}(M,N)^{ \vee} \cong \Tor^R_i(M,N^\ast)\cong\Ext_R^{d+i}(M,N)^{ \vee}\quad(+),$$where the last isomorphism is taken from \cite[Lemma 3.5(2)]{ta}. But, ext-index of $\AB$-rings is  equal to  $\dim R$ (see \cite[Proposition 3.2]{AB}). Let $$\Sigma:=\{(L_1,L_2):\Ext_R^i(L_1,L_2)=0\emph{  for all } i\gg 0  \}.$$ By definition, ext-index is $$\sup\{n:\Ext_R^n(L_1,L_2)\neq 0\emph{ and }\Ext_R^{>n}(L_1,L_2)=0\}_{(L_1,L_2)\in\Sigma}.$$ In sum, $(M,N^\ast)\in\Sigma$ and that $$\Ext_R^{d+i}(M,N^\ast)\neq 0\emph{ and }\Ext_R^{>d+i}(M,N^\ast)=0.$$By definition, $$d\leq d+i\leq\iext(R)=\dim R,$$and so $i=0$ and consequently $\cd(M,N)\leq\dim R$.
\end{proof}

\begin{corollary}
	Let $R$ be an $\AB$ ring,  let $M$ and $N$ be maximal Cohen-Macaulay and locally free over the punctured spectrum. Then $\cd(M,N)<\infty \Longleftrightarrow \cd(N,M)<\infty$.
\end{corollary}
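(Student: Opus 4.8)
The plan is to route the equivalence through the vanishing of Ext and then invoke the defining symmetry of $\AB$-rings. As in the proofs of Propositions \ref{c} and \ref{d}, I would first reduce to the case where $R$ is complete, since completion is faithfully flat and preserves the $\AB$-property, the maximal Cohen-Macaulay hypothesis, local freeness over the punctured spectrum, and the finiteness of $\cd$.

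The key device is the isomorphism already established inside the proof of Proposition \ref{d}: for $N$ Cohen-Macaulay and $M$ locally free over the punctured spectrum one has, for every $i\geq 0$,
$$H^{d+i}_{\fm}(M,N)^{\vee}\cong \Ext_R^{d+i}(M,N)^{\vee},$$
where $d:=\dim R$. Since the Matlis functor $(-)^{\vee}$ kills no nonzero module over a complete local ring, this yields $H^{d+i}_{\fm}(M,N)=0$ if and only if $\Ext^{d+i}_R(M,N)=0$. As $N$ is maximal Cohen-Macaulay, the condition $\cd(M,N)<\infty$ is exactly $H^j_{\fm}(M,N)=0$ for all $j\gg 0$, which by the displayed equivalence is the same as $\Ext^j_R(M,N)=0$ for all $j\gg 0$.

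Now both $M$ and $N$ satisfy the hypotheses of Proposition \ref{d} \emph{symmetrically}, so the identical equivalence with the roles of $M$ and $N$ interchanged shows that $\cd(N,M)<\infty$ is equivalent to $\Ext^j_R(N,M)=0$ for all $j\gg 0$. It then remains only to connect the two Ext conditions, and this is precisely the Huneke--Jorgensen symmetry theorem defining $\AB$-rings \cite{AB}: over an $\AB$-ring, $\Ext^j_R(M,N)=0$ for all $j\gg 0$ if and only if $\Ext^j_R(N,M)=0$ for all $j\gg 0$. Chaining the three equivalences then gives $\cd(M,N)<\infty\Longleftrightarrow\cd(N,M)<\infty$.

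As for obstacles, there is no hard analytic step; the argument is a bookkeeping of three equivalences resting on one quoted theorem. The point that genuinely needs care is that the Matlis-dual isomorphism above is stated asymmetrically (with $M$ locally free on the punctured spectrum and $N$ Cohen-Macaulay), so I would check explicitly that the corollary's symmetric hypotheses let me apply it to both orderings $(M,N)$ and $(N,M)$, and that passing to the completion does not disturb local freeness over the punctured spectrum. The substantive input, the symmetry of Ext-vanishing, is not reproved here but invoked as the characteristic feature of the $\AB$-class.
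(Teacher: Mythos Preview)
Your proposal is correct and follows essentially the same route as the paper: both arguments use the isomorphism $(+)$ from Proposition~\ref{d} to translate $\cd(M,N)<\infty$ into $\Ext_R^{>d}(M,N)=0$ (applied in both orderings, which the symmetric hypotheses permit), and then invoke the Huneke--Jorgensen symmetry of Ext-vanishing over $\AB$-rings. The paper cites \cite[4.3]{AB} for the precise equivalence $\Ext_R^{>d}(M,N)=0\Longleftrightarrow\Ext_R^{>d}(N,M)=0$, whereas you phrase it as the asymptotic symmetry, but these coincide since the ext-index of an $\AB$-ring is $d$.
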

\begin{proof}
Let $d:=\dim(R)$.	In view of $(+)$ from Proposition \ref{d} it is  enough to note that  $$\Ext_R^{>d}(M,N)=0\Longleftrightarrow \Ext_R^{>d}(N,M)=0.$$This is subject of \cite[4.3]{AB}.
\end{proof}

\begin{example}
	The maximal Cohen-Macaulay of both modules are needed. For example, suppose $R$ is not regular, then $\cd(R,k)=0<\infty$ but $\cd(k,R)=\infty$.
\end{example}

\begin{corollary}\label{a}
	Let $R$ be an $\AB$ ring of dimension bigger than one, and $M$ be locally free over the punctured spectrum and maximal Cohen-Macaulay. If $\cd(M,M)<\infty$ then  $M$ is free. In particular, $\cd(M,M)=\dim(R)$.
\end{corollary}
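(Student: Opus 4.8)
The plan is to convert the finiteness of $\cd(M,M)$ into the vanishing of $\Tor^R_i(M,M^\ast)$ for all $i>0$, and then to read off freeness from a Huneke--Wiegand type mechanism: a maximal Cohen--Macaulay module whose tensor product with its dual is again maximal Cohen--Macaulay is forced to be free. First I would pass to the completion and set $d:=\dim R$, so that $d\geq 2$. Since $M$ is maximal Cohen--Macaulay it is Cohen--Macaulay, so Proposition \ref{d} applies and gives $\cd(M,M)\leq d$; equivalently $H^j_{\fm}(M,M)=0$ for every $j>d$. Substituting $N=M$ into the isomorphism $(+)$ established in the proof of Proposition \ref{d}, namely $H^{d+i}_{\fm}(M,M)^{\vee}\cong\Tor^R_i(M,M^\ast)$, I conclude $\Tor^R_i(M,M^\ast)=0$ for all $i\geq 1$.

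Next I would turn this Tor-vanishing into depth information. Both $M$ and $M^\ast$ are maximal Cohen--Macaulay over the Gorenstein ring $R$ (recall $\AB$-rings are Gorenstein, so the $R$-dual of a maximal Cohen--Macaulay module is again maximal Cohen--Macaulay), and all higher Tor's between them vanish; the depth formula available over $\AB$-rings then yields $\depth_R(M^\ast\otimes_R M)=\depth M+\depth M^\ast-\depth R=d$, so $M^\ast\otimes_R M$ is maximal Cohen--Macaulay and in particular has depth at least $2$. For the other module, I would apply the depth lemma to the exact sequence $0\lo\End_R(M)\lo M^{b}\lo C\lo 0$ obtained by applying $\Hom_R(-,M)$ to a finite presentation $R^{a}\lo R^{b}\lo M\lo 0$ (so $C\subseteq M^{a}$); since $C$ is torsion-free and $\depth M=d\geq 2$, this gives $\depth_R\End_R(M)\geq\min\{d,2\}=2$. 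This is exactly the point at which the hypothesis $\dim R>1$ is used.

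I would then compare the two modules through the canonical evaluation map $\theta\colon M^\ast\otimes_R M\lo\End_R(M)$, $\theta(f\otimes x)(y)=f(y)x$. Because $M$ is locally free over the punctured spectrum, $\theta_{\fp}$ is an isomorphism for every prime $\fp\neq\fm$, so $\Ker\theta$ and $\coker\theta$ both have finite length. A nonzero finite-length submodule of $M^\ast\otimes_R M$ is impossible since $\depth(M^\ast\otimes_R M)\geq 2$, hence $\Ker\theta=0$ and $\im\theta\cong M^\ast\otimes_R M$ has depth at least $2$. Applying $H^\bullet_{\fm}(-)$ to $0\lo\im\theta\lo\End_R(M)\lo\coker\theta\lo 0$ and using $H^0_{\fm}(\End_R(M))=0$ together with $H^1_{\fm}(\im\theta)=0$ forces $H^0_{\fm}(\coker\theta)=0$; as $\coker\theta$ has finite length it must vanish. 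Thus $\theta$ is surjective, the identity map $1_M$ lies in its image, and the dual basis lemma shows that $M$ is free.

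Finally, for the supplementary statement, once $M\cong R^{n}$ with $n\geq 1$ I would compute $H^i_{\fm}(M,M)\cong H^i_{\fm}(R)^{\oplus n^{2}}$, which is nonzero precisely when $H^i_{\fm}(R)\neq 0$; since $R$ is Cohen--Macaulay of dimension $d$ this occurs only at $i=d$, giving $\cd(M,M)=d=\dim R$. The main obstacle is the step producing the maximal Cohen--Macaulayness of $M^\ast\otimes_R M$ from the vanishing of the higher Tor's: it rests on the validity of the depth formula over $\AB$-rings, which is the substantive external input, whereas the finite-length bookkeeping for $\Ker\theta$ and $\coker\theta$ is routine once the two depth bounds are in hand.
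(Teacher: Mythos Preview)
Your proof is correct and follows essentially the same route as the paper: reduce via Proposition~\ref{d} to $\Tor^R_{\geq 1}(M,M^\ast)=0$, invoke the depth formula over $\AB$-rings to get $\depth(M\otimes_R M^\ast)=d$, and then conclude freeness. The only difference is cosmetic: where the paper cites \cite[8.1]{ACS} to pass from $H^1_{\fm}(M\otimes_R M^\ast)=0$ to freeness of $M$, you spell out that argument directly via the evaluation map $\theta\colon M^\ast\otimes_R M\to\End_R(M)$, the finite-length kernel/cokernel analysis, and the dual basis lemma---which is precisely the content of the cited result in this setting.
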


\begin{proof}
According to Proposition \ref{d} we observe that $\cd(M,M)\leq \dim R$. By the same reasoning $\Tor^R_+(M,M^\ast)=0$. In the light of depth formula over $\AB$-rings (see \cite[Corollary 5.3(b)]{depth}) we know $\depth(M\otimes_RM^\ast)=\dim R>1$. In particular, $H^1_{\fm}(M\otimes_RM^\ast)=0$. Due to \cite[8.1]{ACS} we conclude that $M$ is free, as desired. \footnote{One may drive this from some known cases of Auslander–Reiten conjecture by showing some $\Ext^+_R(M,M)$ vanishes. As, we are interested in the previous  argument, we left details of this to the reader.}
\end{proof}

From now on, $R$ denotes a local ring of prime characteristic $p>0$, and $\varphi:R\to R$ denotes the \emph{Frobenius endomorphism} given by $\varphi(a)=a^{p}$ for $a\in R$. Each iteration $\varphi_n$ of $\varphi$ defines a new $R$-module structure on the set $R$, and this $R$-module is denoted by $\up{\varphi_n}R$, where $a\cdot b = a^{p^{n}}b$ for $a, b \in R$. 
Recall that $R$ is said to be \emph{F-finite} if the Frobenius endomorphism makes $R$ into a module-finite $R$-algebra, i.e., if $\up{\varphi}R$ is a finitely generated $R$-module.

\begin{corollary}\label{2.6}
	Let $(R,\fm)$ be a  normal and locally $\AB$-ring  of prime characteristic. If $R$ is $F$-finite, then $\cd(\up{\varphi_n}R,\up{\varphi_n}R)<\infty$ iff $R$  is regular.  \end{corollary}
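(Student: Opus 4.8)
The plan is to prove both implications, treating the easy direction via Kunz's theorem and the hard direction by induction on $d:=\dim R$. For the direction ``$R$ regular $\Rightarrow$ finite'', I would note that an $F$-finite regular local ring has, by Kunz's theorem, $\up{\varphi_n}R$ free as an $R$-module; for a free module the generalized local cohomology reduces to ordinary local cohomology of a finite direct sum, whence $\cd(\up{\varphi_n}R,\up{\varphi_n}R)=\dim R<\infty$ by Grothendieck vanishing.

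For the converse, assume $\cd(\up{\varphi_n}R,\up{\varphi_n}R)<\infty$. First I would record two structural facts about $M:=\up{\varphi_n}R$: since $R$ is locally $\AB$ it is in particular Gorenstein, hence Cohen--Macaulay, and because the Frobenius iterate $\varphi_n$ is a module-finite map of the $d$-dimensional ring $R$ to itself, the pushforward $M$ is a maximal Cohen--Macaulay $R$-module (equivalently $\depth_R M = d$). I will argue by induction on $d$. If $d\leq 1$, then a normal local ring is a normal domain, hence a field or a DVR, so $R$ is already regular and there is nothing to prove; this settles the base of the induction.

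Now suppose $d>1$. The key reduction is that finiteness of $\cd$ localizes. Indeed, since $M$ is maximal Cohen--Macaulay, the duality isomorphism $(+)$ of Proposition \ref{d} gives $H^{d+i}_{\fm}(M,M)^{\vee}\cong\Tor^R_i(M,M^{\ast})$, so the hypothesis forces $\Tor^R_i(M,M^{\ast})=0$ for all $i\gg0$. Localizing at a prime $\fp\neq\fm$ and using that both Frobenius pushforward and $R$-duals commute with localization, I obtain $\Tor^{R_{\fp}}_i(\up{\varphi_n}R_{\fp},(\up{\varphi_n}R_{\fp})^{\ast})=0$ for $i\gg0$; reading the same duality over the $\AB$ ring $R_{\fp}$ backwards yields $\cd_{R_{\fp}}(\up{\varphi_n}R_{\fp},\up{\varphi_n}R_{\fp})<\infty$. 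Since $R_{\fp}$ is again normal, locally $\AB$ and $F$-finite of dimension $<d$, the induction hypothesis shows $R_{\fp}$ is regular for every $\fp\neq\fm$, and by Kunz's theorem this means precisely that $M$ is locally free on the punctured spectrum.

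At this point $M$ is maximal Cohen--Macaulay and locally free on the punctured spectrum over the $\AB$ ring $R$ of dimension $>1$, with $\cd(M,M)<\infty$, so Corollary \ref{a} applies and shows $M=\up{\varphi_n}R$ is free; a final appeal to Kunz's theorem gives that $R$ is regular, completing the induction. The main obstacle I anticipate is the localization step: one must check carefully that the duality formula $(+)$ --- which in Proposition \ref{d} was proved after passing to the completion --- transfers to each $R_{\fp}$ so that the vanishing of $\Tor$ genuinely certifies finiteness of $\cd$ over $R_{\fp}$, and that the maximal Cohen--Macaulay and local-freeness properties of $M$ survive localization. The verification that $\up{\varphi_n}R$ is maximal Cohen--Macaulay is the other point requiring care, although it follows from the standard behaviour of $\depth$ under the finite map $\varphi_n$.
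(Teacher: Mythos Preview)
Your proposal is correct and follows essentially the same route as the paper: induction on $\dim R$ with the base case handled by normality, passage from finite $\cd$ to eventual vanishing of $\Tor^R_i(\up{\varphi_n}R,(\up{\varphi_n}R)^\ast)$ via the duality of Proposition~\ref{c}/\ref{d}, localization of this $\Tor$-vanishing to deduce regularity on the punctured spectrum by induction and Kunz, and then an appeal to Corollary~\ref{a}. The only cosmetic difference is that the paper cites the $\Tor$-duality as coming from the proof of Proposition~\ref{c} (via \cite[Lemma~3.4]{HZ}) rather than labeling it $(+)$ from Proposition~\ref{d}; your flagged concerns about localizing the duality and the maximal Cohen--Macaulay property of $\up{\varphi_n}R$ are exactly the routine verifications the paper leaves implicit.
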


Recall that $R$ is locally $\AB$ means $R_{\fp}$ is $\AB$ for any prime ideal $\fp$. This is an open problem \cite{ABu} that $\AB$ rings are locally $\AB$.
\begin{proof}First, suppose that 
	$\cd(\up{\varphi_n}R,\up{\varphi_n}R)<\infty$.	The proof is proceeds via induction on $d:=\dim R$.  Recall that normality means $(S_2)+(R_1)$, so the desired claim is clear for $d<2$. We now want to verify the inductive hypothesis, and drive an additional property. Since $\up{\varphi_n}R$ is maximal Cohen-Macaulay, $H^{d+i}_{\fm}(\up{\varphi_n}R,\up{\varphi_n}R)^{ \vee}\cong \Tor^R_i((\up{\varphi_n}R),(\up{\varphi_n}R)^\ast)$, and the later module behaves nicely with respect to localization, namely,   $(\up{\varphi_n}R)_{\fp}= \up{\varphi_n}(R_{\fp})$ and so  $\Tor^{R_{\fp}}_i((\up{\varphi_n}R_{\fp}),(\up{\varphi_n}R_{\fp})^\ast)=0$. Again, thanks to $H^{\Ht(\fp)+i}_{\fp R_{\fp}}((\up{\varphi_n}R_{\fp}),(\up{\varphi_n}R_{\fp}))^{ \vee}\cong \Tor^{R_{\fp}}_i((\up{\varphi_n}R_{\fp}),(\up{\varphi_n}R_{\fp})^\ast)$, we deduced that $\cd((\up{\varphi_n}R_{\fp}),(\up{\varphi_n}R_{\fp}))<\infty$. It is easy to see that $R_{\fp}$ is $F$-finite, and from the assumption it is locally $\AB$. In view of the inductive hypothesis, $R$ is regular over punctured spectrum.
By a result of Kunz \cite{Ku} $\up{\varphi_n}R$ is locally free over the punctured spectrum, also it is maximal Cohen-Macaulay. The claim now follows from Corollary \ref{a}.

The reverse part is an immediate application of Kunz theorem along with Grothendieck's vanishing theorem.
\end{proof}

\begin{corollary}
	Let $(R,\fm)$ be an  $\AB$-ring of isolated singularity of prime characteristic  and dimension bigger than one. If $R$ is $F$-finite, then $\cd(\up{\varphi_n}R,\up{\varphi_n}R)<\infty$ iff $R$  is regular.
\end{corollary}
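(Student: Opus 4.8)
The plan is to mirror the proof of Corollary \ref{2.6}, but to exploit the isolated singularity hypothesis in order to bypass the inductive localization argument entirely. The point is that ``isolated singularity'' already delivers, for free, the regularity over the punctured spectrum that the proof of Corollary \ref{2.6} had to build up by induction using the locally $\AB$ assumption. So the entire corollary should reduce to a direct application of Corollary \ref{a} together with Kunz's theorem \cite{Ku}.

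For the forward direction, I would assume $\cd(\up{\varphi_n}R,\up{\varphi_n}R)<\infty$ and argue as follows. Since an $\AB$-ring is Gorenstein, and hence Cohen-Macaulay, the module $\up{\varphi_n}R$ is maximal Cohen-Macaulay (a system of parameters of $R$ remains a regular sequence on $\up{\varphi_n}R$). The isolated singularity hypothesis says precisely that $R_{\fp}$ is regular for every non-maximal prime $\fp$, i.e. $R$ is regular over the punctured spectrum. By Kunz's theorem, regularity of $R_{\fp}$ is equivalent to flatness, hence freeness, of $(\up{\varphi_n}R)_{\fp}=\up{\varphi_n}(R_{\fp})$; thus $\up{\varphi_n}R$ is locally free over the punctured spectrum. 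As $\dim R>1$, all the hypotheses of Corollary \ref{a} are met with $M:=\up{\varphi_n}R$, and we conclude that $\up{\varphi_n}R$ is free. A final appeal to Kunz's theorem then forces $R$ to be regular.

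The reverse direction is immediate: if $R$ is regular, then Kunz's theorem gives that $\up{\varphi_n}R$ is free, so that $H^j_{\fm}(\up{\varphi_n}R,\up{\varphi_n}R)$ vanishes for all $j>\dim R$ by Grothendieck's vanishing theorem, whence $\cd(\up{\varphi_n}R,\up{\varphi_n}R)\leq\dim R<\infty$.

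The only points requiring care are the two structural facts feeding into Corollary \ref{a}, namely that $\up{\varphi_n}R$ is maximal Cohen-Macaulay and locally free over the punctured spectrum; both are standard consequences of the Cohen-Macaulayness of $R$ and of Kunz's theorem applied over the regular local rings $R_{\fp}$. There is no genuine obstacle here beyond checking that the isolated singularity hypothesis cleanly replaces the inductive step of Corollary \ref{2.6}, which it does once one observes that isolated singularity is exactly regularity on the punctured spectrum.
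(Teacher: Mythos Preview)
Your proposal is correct and matches the paper's intended argument: the corollary is stated without proof precisely because the isolated singularity hypothesis supplies, directly, the regularity over the punctured spectrum that the proof of Corollary \ref{2.6} obtained by induction, after which one invokes Kunz's theorem and Corollary \ref{a} exactly as you do. Your write-up of the reverse direction via Kunz and Grothendieck vanishing is also the same as in Corollary \ref{2.6}.
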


\begin{corollary}
	Let $R$ be complete-intersection and $F$-finite. Then $$\cd(\up{\varphi_n}R,\up{\varphi_n}R)<\infty\Longleftrightarrow\emph{R  is regular}.$$  In particular, if the ring  is not regular, then $H^j_{\fm}(\up{\varphi_n}R,\up{\varphi_n}R)\neq 0\Longleftrightarrow j\geq \dim R$.
\end{corollary}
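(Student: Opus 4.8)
The plan is to convert the finiteness of $\cd$ into an eventual vanishing of $\Tor$ and then to invoke the theory of support varieties over complete intersections, which forces finite projective dimension; Kunz's theorem then yields regularity. First I would record the standing structural facts: a complete intersection is Gorenstein (indeed an $\AB$-ring), it is $F$-finite by hypothesis so $\up{\varphi_n}R$ is a finitely generated $R$-module, and since $R$ is Cohen--Macaulay the Frobenius twist $\up{\varphi_n}R$ is maximal Cohen--Macaulay. Hence the duality isomorphism used in Propositions \ref{c} and \ref{d} applies with $M=N=\up{\varphi_n}R$, giving for $d:=\dim R$ and all $i\geq 0$
$$H^{d+i}_{\fm}(\up{\varphi_n}R,\up{\varphi_n}R)^{\vee}\cong\Tor^R_i(\up{\varphi_n}R,(\up{\varphi_n}R)^\ast).$$
Combined with the finite-length vanishing $\Ext^j_R(\up{\varphi_n}R/\fm^t\up{\varphi_n}R,\up{\varphi_n}R)=0$ for $j<\depth(\up{\varphi_n}R)=d$, this shows $H^j_{\fm}(\up{\varphi_n}R,\up{\varphi_n}R)=0$ for $j<d$ and reduces everything to the behaviour of $\Tor^R_i(\up{\varphi_n}R,(\up{\varphi_n}R)^\ast)$ for $i\geq 0$.

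For the forward implication I would argue that $\cd(\up{\varphi_n}R,\up{\varphi_n}R)<\infty$ is equivalent, via the displayed isomorphism, to $\Tor^R_i(\up{\varphi_n}R,(\up{\varphi_n}R)^\ast)=0$ for $i\gg 0$. Over a complete intersection such eventual $\Tor$-vanishing is detected by support varieties: it holds precisely when $V(\up{\varphi_n}R)\cap V((\up{\varphi_n}R)^\ast)=\{0\}$. Using that duality preserves support varieties of maximal Cohen--Macaulay modules over a Gorenstein ring, together with the self-duality $(\up{\varphi_n}R)^\ast\cong\up{\varphi_n}R$ available over an $F$-finite Gorenstein ring, this intersection equals $V(\up{\varphi_n}R)$; hence the condition is $V(\up{\varphi_n}R)=\{0\}$, i.e. $\pd_R\up{\varphi_n}R<\infty$. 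By Kunz's theorem the latter forces $R$ to be regular. I emphasize that this argument is uniform in $n$, which is exactly why the complete-intersection hypothesis lets us dispense with the ``$n\gg 0$'' restriction present in the Cohen--Macaulay case (e). The reverse implication is the easy direction: if $R$ is regular then Kunz makes $\up{\varphi_n}R$ free, so $\pd_R\up{\varphi_n}R=0$ and Grothendieck's vanishing theorem gives $\cd(\up{\varphi_n}R,\up{\varphi_n}R)\leq d<\infty$.

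For the final ``in particular'' assertion, assume $R$ is not regular, so $\cd=\infty$. The range $j<d$ is already handled above. For $j\geq d$ I would use the displayed isomorphism once more: $H^{d+i}_{\fm}\neq0$ if and only if $\Tor^R_i(\up{\varphi_n}R,(\up{\varphi_n}R)^\ast)\neq0$. The degree $i=0$ term is $\up{\varphi_n}R\otimes_R(\up{\varphi_n}R)^\ast\neq0$, so $H^d_{\fm}\neq0$. The substantive point, and the step I expect to be the main obstacle, is to rule out any gap, i.e. to show that no intermediate $\Tor^R_i$ vanishes. Here I would invoke rigidity over complete intersections: a vanishing $\Tor^R_{i_0}(\up{\varphi_n}R,(\up{\varphi_n}R)^\ast)=0$ would, by the finite generation of $\Tor$ (equivalently $\Ext$) over the ring of Eisenbud operators and the resulting rigidity, propagate to $\Tor^R_i=0$ for all $i\geq i_0$, whence $\cd<\infty$, contradicting non-regularity. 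Thus $\Tor^R_i\neq0$ for every $i\geq0$ and $H^j_{\fm}(\up{\varphi_n}R,\up{\varphi_n}R)\neq0$ exactly for $j\geq d$. Making the rigidity/no-gap step precise for this self-paired module is the delicate part; everything else is a direct assembly of the reductions above with Kunz's theorem.
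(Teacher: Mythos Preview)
Your argument for the main equivalence is correct and takes a genuinely different route from the paper. The paper proceeds by induction on $\dim R$: it localizes to reduce to the isolated-singularity case, and then invokes the earlier $\AB$-ring corollary (Corollary~\ref{a}) about maximal Cohen--Macaulay modules that are locally free on the punctured spectrum. Your approach bypasses the induction entirely: after the same Matlis-dual/Tor identification you feed in Goto's self-duality $(\up{\varphi_n}R)^\ast\cong\up{\varphi_n}R$ over an $F$-finite Gorenstein ring together with Avramov--Buchweitz support varieties to get $V(\up{\varphi_n}R)=\{0\}$, hence $\pd_R(\up{\varphi_n}R)<\infty$, and then Kunz finishes. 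This is cleaner and uses nothing from the paper's locally-free-on-the-punctured-spectrum machinery; the paper's route, in exchange, stays entirely within its own framework and avoids importing Goto's theorem or support-variety theory.

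For the ``in particular'' clause your architecture matches the paper's---a single vanishing $H^{d+i_0}_{\fm}=0$ should force all higher ones to vanish and hence contradict non-regularity---but your stated justification has a real gap. Finite generation of $\Tor^R_\ast(M,N)$ over the ring of Eisenbud operators does \emph{not} give rigidity from a single vanishing: over a complete intersection of codimension $c\geq 2$ one needs $c$ consecutive vanishing $\Tor$'s to force the higher ones to vanish (Murthy), and there are genuine counterexamples to one-spot rigidity. What makes the step work here is that one of the factors is $\up{\varphi_n}R$: the Avramov--Miller theorem (see \cite{Mi}, which is exactly what the paper cites) says that over a complete intersection, $\Tor^R_i(L,\up{\varphi_n}R)=0$ for a single $i>0$ already forces $\pd_R(L)<\infty$. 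Applied with $L=(\up{\varphi_n}R)^\ast$ this gives that $(\up{\varphi_n}R)^\ast$ is free (it is maximal Cohen--Macaulay), hence $\up{\varphi_n}R\cong(\up{\varphi_n}R)^{\ast\ast}$ is free, and Kunz yields the contradiction. So replace your Eisenbud-operator appeal by the Frobenius-specific rigidity of Avramov--Miller and the argument closes.
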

\begin{proof}First, suppose $\cd(\up{\varphi_n}R,\up{\varphi_n}R)<\infty$. The proof is by induction on $d:=\dim R$.
Due to \cite{HM} we may assume $d>1$.
In view of the inductive hypothesis, we may  and do assume that the ring is of isolated singularity.
Now by an argument similar to Corollary \ref{2.6} the ring is regular. The reverse implication is also trivial.
Following the previous result, it is enough to prov  the particular case.  Since (see \cite[Proposition 5.5]{bi}) $$\inf\{J:H^j_{\fm}(M,N)\neq0\}=\grade(\Ann_R(\frac{\up{\varphi_n}R}{\fm\up{\varphi_n}R}),\up{\varphi_n}R)=\depth(\up{\varphi_n}R)=d,$$ we know $H^j_{\fm}(\up{\varphi_n}R,\up{\varphi_n}R)=0$ for all $j< d$. Suppose the ring is not regular and   on the contrary suppose that  $H^j_{\fm}(\up{\varphi_n}R,\up{\varphi_n}R)=0$ for some $j\geq d$.  Thanks to the proof of Proposition \ref{d} we know $\Tor^R_{j-d}(\up{\varphi_n}R,(\up{\varphi_n}R)^\ast)=0$. By complete-intersection assumption (see \cite{Mi}), $\Tor^R_{>j-d}(\up{\varphi_n}R,(\up{\varphi_n}R)^\ast)=0$. This in turn implies that $H^{>j}_{\fm}(\up{\varphi_n}R,\up{\varphi_n}R)=0$.
From the first part,  we conclude that $R$  is regular. This is a contradiction that we searched.
\end{proof}

Now, we extend the previous corollaries:
\begin{proposition}\label{idg}
	Let $R$ be a Cohen-Macaulay $F$-finite ring, $M$ be finitely generated and $n\gg 0$. The following are equivalent:
		\begin{itemize}
		\item[$(1) $] $\cd(\up{\varphi_n}R,M)\leq \dim R$
		\item[$(2) $] $\cd(\up{\varphi_n}R,M)<\infty$
		
		\item[$(3) $]  $\id_R(M)<\infty.$
		
	\end{itemize}
In particular, for some and so all $m>0$, we have $\cd(\up{\varphi_n}R,\up{\varphi_m}R)<\infty  \Longleftrightarrow\emph{R  is regular}$.
\end{proposition}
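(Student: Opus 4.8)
The plan is to prove the three implications in the cycle $(1)\Rightarrow(2)\Rightarrow(3)\Rightarrow(1)$, with essentially all the weight on $(2)\Rightarrow(3)$. The implication $(1)\Rightarrow(2)$ is immediate, since $\dim R<\infty$. For $(3)\Rightarrow(1)$ I would simply quote the classical vanishing recalled in the introduction: when $\id_R(M)<\infty$ one has $H^j_{\fm}(L,M)=0$ for every finitely generated $L$ and every $j>d:=\dim R$ (see \cite{HZ}), so in particular $\cd(\up{\varphi_n}R,M)\leq d$ for \emph{every} $n$. Thus the whole content is the reverse direction $(2)\Rightarrow(3)$, where the Frobenius genuinely enters.

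For $(2)\Rightarrow(3)$ the idea is to translate finiteness of $\cd(\up{\varphi_n}R,M)$ into the language of ordinary $\Ext$ (resp. $\Tor$) and then to run a Frobenius, Kunz-type criterion. Concretely, I would first dispose of the case in which $M$ is Cohen-Macaulay: there the Grothendieck spectral sequence for the composite $\Gamma_{\fm}\circ\Hom_R(\up{\varphi_n}R,-)$ collapses, because $H^q_{\fm}(M)$ is concentrated in the single degree $q=d$, and one recovers exactly the identity used in Proposition \ref{d}, namely $H^{d+i}_{\fm}(\up{\varphi_n}R,M)^{\vee}\cong\Tor^R_i(\up{\varphi_n}R,M^{\ast})$ with $M^{\ast}=\Hom_R(M,\omega_R)$ (via \cite{HZ,ta}). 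Hence $\cd(\up{\varphi_n}R,M)<\infty$ is equivalent to $\Tor^R_i(\up{\varphi_n}R,M^{\ast})=0$ for $i\gg0$. At this point I would invoke the Frobenius flatness principle (the injective-dimension side of Kunz's theorem \cite{Ku}): for $n\gg0$ over an $F$-finite Cohen-Macaulay ring, such asymptotic $\Tor$-vanishing forces $M^{\ast}$ to have finite projective dimension, which by local/Foxby duality is in turn equivalent to $\id_R(M)<\infty$.

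The general, non-Cohen-Macaulay $M$ is where I expect the real difficulty, and I would reduce it to the Cohen-Macaulay case by induction on $d=\dim R$, in the spirit of Corollary \ref{2.6}. Localizing at a prime $\fp\neq\fm$ preserves the hypotheses, since $(\up{\varphi_n}R)_{\fp}=\up{\varphi_n}(R_{\fp})$, $R_{\fp}$ is again Cohen-Macaulay and $F$-finite of smaller dimension, and the relevant cohomology localizes; the inductive hypothesis then yields $\id_{R_{\fp}}(M_{\fp})<\infty$ for all $\fp\neq\fm$, i.e. $M$ has finite injective dimension everywhere on the punctured spectrum, so the only possible obstruction sits at the closed point. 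The remaining closed-point step is the main obstacle: here one must pass from the vanishing information at $\fm$ to global finite injective dimension, and I would do this by comparing $\Ext^{\bullet}_R(\up{\varphi_n}R,M)$ with $\Ext^{\bullet}_R(k,M)$ through the Frobenius surjection $\up{\varphi_n}R\twoheadrightarrow\up{\varphi_n}k=k$ (with kernel $\up{\varphi_n}\fm$), and then reading off $\id_R(M)=\sup\{i:\Ext^i_R(k,M)\neq0\}$ from Bass's criterion, the punctured-spectrum finiteness keeping the comparison terms of finite length and hence under control. This closed-point comparison—equivalently, the clean form of the Kunz-type criterion for $n\gg0$—is exactly the crux, just as the flatness criterion is the crux on the projective side.

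Finally, for the closing assertion I would specialize $M=\up{\varphi_m}R$. By the equivalence just established, $\cd(\up{\varphi_n}R,\up{\varphi_m}R)<\infty$ holds iff $\id_R(\up{\varphi_m}R)<\infty$. Since $R$ is Cohen-Macaulay, $\up{\varphi_m}R$ is a maximal Cohen-Macaulay $R$-module, and a maximal Cohen-Macaulay module of finite injective dimension over a Cohen-Macaulay local ring with canonical module is a direct sum of copies of $\omega_R$; feeding $\up{\varphi_m}R\cong\omega_R^{\oplus r}$ back through Kunz's theorem \cite{Ku} forces $R$ to be regular, while conversely regularity makes $\up{\varphi_m}R$ free and hence of finite injective dimension. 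This yields the stated equivalence with regularity, independently of the choice of $m>0$.
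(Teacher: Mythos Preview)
Your treatment of $(1)\Rightarrow(2)$, $(3)\Rightarrow(1)$, and the \emph{maximal} Cohen--Macaulay case of $(2)\Rightarrow(3)$ is essentially the paper's. The genuine gap is your reduction from arbitrary $M$ to the MCM case. Induction on $\dim R$ via localization does not go through as written: the hypothesis $\cd_{\fm}(\up{\varphi_n}R,M)<\infty$ does not obviously localize to $\cd_{\fp R_{\fp}}(\up{\varphi_n}R_{\fp},M_{\fp})<\infty$ for non-MCM $M$ (the $\Tor$-translation you borrow from Proposition~\ref{d} already needs $M$ MCM, so it cannot bootstrap the induction), and your ``closed-point step''---comparing $\Ext^{\bullet}_R(\up{\varphi_n}R,M)$ to $\Ext^{\bullet}_R(k,M)$ through $\up{\varphi_n}R\twoheadrightarrow k$---is, as you yourself flag, only a heuristic. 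The paper removes both obstacles in one move: complete $R$ and take an Auslander--Buchweitz approximation $0\to I\to X\to M\to 0$ with $X$ maximal Cohen--Macaulay and $\id_R(I)<\infty$. Since $\cd(\up{\varphi_n}R,I)\le d$, the long exact sequence forces $\cd(\up{\varphi_n}R,X)<\infty$; now your own MCM argument (Koh--Lee applied to $\Tor^R_{\gg0}(\up{\varphi_n}R,X^{\dagger})=0$) gives $X^{\dagger}$ free, hence $X\cong X^{\dagger\dagger}\cong\omega_R^{\oplus r}$, and the short exact sequence yields $\id_R(M)<\infty$. No localization, no induction, no closed-point step.

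A smaller issue: in the closing assertion, ``feeding $\up{\varphi_m}R\cong\omega_R^{\oplus r}$ back through Kunz's theorem'' is not a proof---Kunz detects \emph{freeness} of $\up{\varphi_m}R$, and you have not shown $\omega_R$ is free. The paper instead quotes the Avramov--Iyengar--Miller criterion \cite[Theorem~13.3]{Av}, which directly gives that $\id_R(\up{\varphi_m}R)<\infty$ forces $R$ to be regular.
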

\begin{proof}	$(1)\Rightarrow (2)$: Trivial, as the ring is local.
	
	$(2)\Rightarrow (3)$:	Suppose  $\cd(\up{\varphi_n}R,M)<\infty$.
	Without loss of generality we may assume the ring is complete, so it has the canonical module.  We look at Auslander-Buchweitz approximation theory (see \cite[3.3.28]{BH}). This 
	gives a short exact sequence $$0\lo I\lo X\lo M\lo 0\quad(\ast)$$ where $X$ is maximal Cohen-Macaulay and $I$ is of finite injective dimension. Let $i$ be bigger than $\max\{\cd(\up{\varphi_n}R,M),\id(I)\}$ and be finite.
 This gives the exact sequence $$0=H^{i}_{\fm}(\up{\varphi_n}R,I)\lo H^i_{\fm}(\up{\varphi_n}R,X)\lo H^i_{\fm}(\up{\varphi_n}R,M)=0\quad(+)$$From $(+)$ we see $H^i_{\fm}(\up{\varphi_n}R,X)=0$ and so
 $\cd(\up{\varphi_n}R,X)<\infty$. Set $(-)^\dagger:=\Hom(-,\omega_R)$. Recall from \cite[Theorem 3.3.10]{BH} that 
  $X^\dagger=\Hom(X,\omega_R) $ is maximal Cohen-Macaulay. Let $d:=\dim(R)$. 
According to   the proof of Proposition \ref{c} that  $H^{d+i}_{\fm}(\up{\varphi_n}R,X)^{ \vee}\cong \Tor^R_i(\up{\varphi_n}R,X^\dagger), $ and so for all $i\gg 0$ we have $\Tor^R_i(\up{\varphi_n}R,X^\dagger)=0.$
By the work of Koh-Lee \cite[Theorem 2.2.8]{Mi}, $\pd(X^\dagger)<\infty$. Thanks to the  Auslander-Buchsbaum formula, $X^\dagger=\oplus_I R$ is free. In the light of \cite[Theorem 3.3.10]{BH} we see 	$$X\cong X^{\dagger\dagger }=\oplus_I \omega_R$$ which is of finite injective dimension. Apply this along with $(\ast)$ we observe that
$M$ is  of finite injective dimension. 

	$(3)\Rightarrow (1)$: If $\id(M)<\infty$, then 
	$\id(M)=\depth(R)$. As the ring is Cohen-Macaulay\footnote{Following Bass' conjecture we do not assume this.}, we see $\Ext^i_R(-,M)=0$ for all $i>\dim R$, and so 
$\cd(\up{\varphi_n}R,M)\leq \dim R$.

	To see the particular case it is enough to apply Avramov-Iyengar-Miller  \cite[Theorem 13.3]{Av}.
\end{proof}
The dual version is easy:
\begin{observation}\label{d2}
i)	Let $R$ be a Cohen-Macaulay $F$-finite ring with canonical
	 module, $M$ be finitely generated and $n\gg 0$.  Then $\cd(M,(\up{\varphi_n}R)^\dagger)<\infty$
	iff  $\pd_R(M)<\infty.$
	
	ii) If in addition $R$ is Gorenstein, then $\cd(M,\up{\varphi_n}R)<\infty$
	iff  $\pd_R(M)<\infty.$
\end{observation}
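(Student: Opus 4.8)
The plan is to deduce both statements from the local-duality isomorphism established in the proof of Proposition \ref{c}, together with the characteristic-$p$ Tor-vanishing criterion of Koh--Lee already used in Proposition \ref{idg}. First I would pass to the completion, where local duality is available; this is harmless since completion commutes with the Frobenius pushforward and preserves both finite projective dimension and the relevant cohomological dimension, and the canonical module is already at hand.

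For (i), observe that because $R$ is Cohen-Macaulay, $\up{\varphi_n}R$ is maximal Cohen-Macaulay, and hence so is its canonical dual $N:=(\up{\varphi_n}R)^\dagger=\Hom(\up{\varphi_n}R,\omega_R)$; moreover $N^\dagger\cong\up{\varphi_n}R$ by biduality of maximal Cohen-Macaulay modules. Since $N$ is Cohen-Macaulay, the isomorphism $H^{d+i}_{\fm}(M,N)^{\vee}\cong\Tor^R_i(M,N^\dagger)$ from the proof of Proposition \ref{c} applies and yields $H^{d+i}_{\fm}(M,(\up{\varphi_n}R)^\dagger)^{\vee}\cong\Tor^R_i(M,\up{\varphi_n}R)$. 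Therefore $\cd(M,(\up{\varphi_n}R)^\dagger)<\infty$ if and only if $\Tor^R_i(M,\up{\varphi_n}R)=0$ for all $i\gg 0$. The implication $\pd_R(M)<\infty\Rightarrow\Tor^R_i(M,-)=0$ for $i>\pd_R(M)$ is immediate and holds for every $n$; for the converse I would use the symmetry $\Tor^R_i(M,\up{\varphi_n}R)\cong\Tor^R_i(\up{\varphi_n}R,M)$ to feed this vanishing (for $n\gg 0$) into Koh--Lee \cite[Theorem 2.2.8]{Mi}, exactly as in Proposition \ref{idg}, and conclude $\pd_R(M)<\infty$. This settles (i).

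For (ii), with $R$ Gorenstein we have $\omega_R\cong R$, and Grothendieck duality for the finite Frobenius map gives $(\up{\varphi_n}R)^\dagger=\Hom(\up{\varphi_n}R,\omega_R)\cong\up{\varphi_n}\omega_R\cong\up{\varphi_n}R$. Thus the target module $(\up{\varphi_n}R)^\dagger$ appearing in (i) is identified with $\up{\varphi_n}R$ itself, so $\cd(M,\up{\varphi_n}R)<\infty\Leftrightarrow\pd_R(M)<\infty$ follows at once from (i).

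The only place where anything beyond bookkeeping enters is lining up the dualities: the biduality $N^\dagger\cong\up{\varphi_n}R$ used in (i) and the trace identification $(\up{\varphi_n}R)^\dagger\cong\up{\varphi_n}\omega_R$ used in (ii). Both rest on $R$ being Cohen-Macaulay, $F$-finite, and possessing a canonical module, which is exactly why the reduction to the complete case and the transfer of Proposition \ref{idg} through canonical duality go through without friction; granting these, the argument is formal, which is why the dual version is indeed easy.
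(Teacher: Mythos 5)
Your proposal is correct and follows essentially the same route as the paper: the paper's proof of (i) simply defers to ``the straightforward modification'' of the proof of Proposition \ref{idg}, and your argument --- apply the duality $H^{d+i}_{\fm}(M,N)^{\vee}\cong\Tor^R_i(M,N^\dagger)$ with $N=(\up{\varphi_n}R)^\dagger$ (no Auslander--Buchweitz step needed since $N$ is already maximal Cohen--Macaulay), use biduality $N^\dagger\cong\up{\varphi_n}R$, and invoke Koh--Lee --- is exactly that modification. For (ii) the paper cites Goto's self-duality of $\up{\varphi_n}R$ over Gorenstein rings, which is the same identification you obtain via duality for the finite Frobenius map.
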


\begin{proof}
	i) This is included in the proof of Proposition \ref{idg}, where we left the straightforward modification to the reader.

ii) Recall that $(\up{\varphi_n}R)^\dagger=(\up{\varphi_n}R)^\ast$
and that over Gorenstein rings
$(\up{\varphi_n}R)$ is self-dual, see \cite[1.1]{gotoo}. Now, it is enough to apply item i).
 \end{proof}

As a sample, we present a method to drop
F-finiteness:

\begin{proposition}\label{df}
i)	Let $R$ be a Cohen-Macaulay ring of prime characteristic with canonical module, $M$ be finitely generated
 and $n\gg 0$.  Then $\cd(M,\omega_{\up{\varphi_n }R})<\infty$
iff  $\pd_R(M)<\infty.$

ii) If in addition $R$ is Gorenstein, then $\cd(M,\up{\varphi_n}R)<\infty$
iff  $\pd_R(M)<\infty.$
\end{proposition}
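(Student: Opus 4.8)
The plan is to establish a generalized local duality isomorphism in the second variable and then quote the Frobenius--Tor criterion already used in Proposition \ref{idg}. Part (ii) is formal from part (i): when $R$ is Gorenstein we have $\omega_R=R$, so $\omega_{\up{\varphi_n}R}=\up{\varphi_n}(\omega_R)=\up{\varphi_n}R$ and (i) applies directly. Thus I concentrate on (i). Write $d:=\dim R$, let $E:=E_R(k)$ be the injective hull and $(-)^\vee:=\Hom_R(-,E)$ the Matlis functor, and set $S:=\up{\varphi_n}R$, which as a ring is isomorphic to $R$; accordingly $\omega_{\up{\varphi_n}R}$ is the canonical module $\omega_S$ of $S$, restricted to $R$ along $\varphi_n$. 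The point where F-finiteness is being dropped is that $\omega_{\up{\varphi_n}R}$ need not be finitely generated over $R$; however it is maximal Cohen--Macaulay over $S$, and since $\fm S$ and $\fm_S$ have the same radical its local cohomology may be computed over $S$, giving $H^q_\fm(\omega_{\up{\varphi_n}R})=0$ for $q\neq d$ and $H^d_\fm(\omega_{\up{\varphi_n}R})\cong\up{\varphi_n}E$.

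The key step is to combine this with the standard spectral sequence for generalized local cohomology (see \cite{bi}), namely $E_2^{p,q}=\Ext^p_R(M,H^q_\fm(N))\Rightarrow H^{p+q}_\fm(M,N)$ for $M$ finitely generated, applied to $N:=\omega_{\up{\varphi_n}R}$. As only the row $q=d$ is nonzero, it degenerates, giving $H^{d+p}_\fm(M,\omega_{\up{\varphi_n}R})\cong\Ext^p_R(M,\up{\varphi_n}E)$ and also $H^j_\fm(M,\omega_{\up{\varphi_n}R})=0$ for $j<d$. Next I use the Cartan--Eilenberg isomorphism for the ring map $\varphi_n\colon R\to S$: since $E=E_S(k)$ is injective over $S$, a free resolution $P_\bullet\to M$ over $R$ yields $\Ext^p_R(M,\up{\varphi_n}E)=H^p(\Hom_S(S\otimes_RP_\bullet,E))\cong\Hom_S(\Tor^R_p(\up{\varphi_n}R,M),E)$, where the first identification is the tensor--restriction adjunction and the second uses exactness of $\Hom_S(-,E)$. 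Since $\Hom_S(-,E)$ is Matlis duality over $S\cong R$, the two displays combine into the generalized local duality
$$H^{d+p}_\fm(M,\omega_{\up{\varphi_n}R})^\vee\cong\Tor^R_p(\up{\varphi_n}R,M),$$
analogous to the local-duality formulas used in Propositions \ref{c} and \ref{idg}.

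With this isomorphism the proposition follows quickly. Matlis duality is faithful, so $\cd(M,\omega_{\up{\varphi_n}R})=d+\sup\{p:\Tor^R_p(\up{\varphi_n}R,M)\neq0\}$, and hence $\cd(M,\omega_{\up{\varphi_n}R})<\infty$ precisely when $\Tor^R_p(\up{\varphi_n}R,M)=0$ for $p\gg0$. If $\pd_R(M)<\infty$ this is immediate from a finite free resolution, giving $\cd(M,\omega_{\up{\varphi_n}R})\leq d+\pd_R(M)$; conversely, for $n\gg0$ the vanishing of $\Tor^R_p(\up{\varphi_n}R,M)$ for $p\gg0$ forces $\pd_R(M)<\infty$ by the Koh--Lee theorem \cite[Theorem 2.2.8]{Mi} already invoked in Proposition \ref{idg}. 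I expect the main obstacle to be exactly the middle paragraph: because $\omega_{\up{\varphi_n}R}$ fails to be finitely generated over $R$ in the absence of F-finiteness, the maximal Cohen--Macaulay duality \cite[Lemma 3.4]{HZ} used in Proposition \ref{c} cannot be quoted directly, and one must instead justify the local-cohomology computation over $S$ and set up the base-change duality by hand; once the displayed isomorphism is in place, the equivalence with finite projective dimension is formal.
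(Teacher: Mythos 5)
Your proposal is correct, but it takes a genuinely different route from the paper's. The paper also begins with the collapsing spectral sequence $\Ext^p_R(M,H^q_{\fm}(\omega_{\up{\varphi_n }R}))\Rightarrow H^{p+q}_{\fm}(M,\omega_{\up{\varphi_n }R})$, but then handles the lack of F-finiteness by passing to a faithfully flat local extension $(S,\fn)$ with $\fm S=\fn$ and $S$ F-finite (a gonflement $\bar{k}[[x_1,\dots,x_m]]/I\bar{k}[[x_1,\dots,x_m]]$), establishing the base-change isomorphism $\omega_{\up{\varphi_n }R}\otimes_RS\cong\Hom_S(\up{\varphi_n }S,\omega_S)$, transporting the Ext-vanishing across $R\to S$, and finally quoting the already-proved F-finite case (Observation \ref{d2}) together with descent of finite projective dimension along the flat map. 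You instead stay over $R$: you identify $H^d_{\fm}(\omega_{\up{\varphi_n }R})$ with $\up{\varphi_n}E$ via the independence theorem and local duality over the ring $\up{\varphi_n}R\cong R$, and then use the tensor--restriction adjunction $\Hom_R(P_\bullet,\up{\varphi_n}E)\cong\Hom_{\up{\varphi_n}R}(\up{\varphi_n}R\otimes_RP_\bullet,E)$ together with exactness of $\Hom_{\up{\varphi_n}R}(-,E)$ to convert the Ext-vanishing into $\Tor^R_p(\up{\varphi_n}R,M)=0$ for $p\gg0$, which Koh--Lee converts into $\pd_R(M)<\infty$. Your route is shorter, avoids all the base-change bookkeeping (including the descent arguments), and yields the sharper quantitative statement $\cd(M,\omega_{\up{\varphi_n }R})=d+\sup\{p:\Tor^R_p(\up{\varphi_n}R,M)\neq0\}$; the paper's route buys the reusable isomorphism $(\natural)$ and leverages the F-finite case already on record. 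Two points to tighten: first, your displayed duality should read $H^{d+p}_{\fm}(M,\omega_{\up{\varphi_n }R})\cong\Tor^R_p(\up{\varphi_n}R,M)^{\vee}$, with the Matlis dual over $\up{\varphi_n}R$ sitting on the Tor side --- biduality is neither available for these non-finitely-generated modules nor needed, and your argument indeed only uses faithfulness of $\Hom(-,E)$; second, you should say explicitly why the Koh--Lee criterion \cite[Theorem 2.2.8]{Mi} applies without F-finiteness, namely because it is a statement about the minimal free resolution of $M$ base-changed along $\varphi_n$, which is a complex of finitely generated free modules over the ring $\up{\varphi_n}R\cong R$ regardless of whether $\up{\varphi_n}R$ is module-finite over $R$.
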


\begin{proof}
We prove both claims at the same times.	
Suppose $\cd(M,\omega_{\up{\varphi_n }R})<\infty$. Recall that $M$ is finitely generated. It turns out there is a Grothendieck's spectral sequence
$\Ext_R^p(M,H^{q}_{\fm}(\omega_{\up{\varphi_n }R}))\Rightarrow H^{p+q}_{\fm}(M,\omega_{\up{\varphi_n }R}).$ 
Recall that  $\omega_{\up{\varphi_n }R}$ is maximal  Cohen-Macaulay over the algebra $\up{\varphi_n }R$. In particular, it is 
big Cohen-Macaulay as an $R$-module. Following definition, 
$H^{q}_{\fm}(\omega_{\up{\varphi_n }R})=0$ for all $q\neq d:=\dim R$, i.e., the spectral sequence
collapses and we have $$H^{d+q}_{\fm}(M,\omega_{\up{\varphi_n }R})\cong\Ext_R^p\big(M,H^{d}_{\fm}(\omega_{\up{\varphi_n }R})\big).$$

There is a local ring extension $(S,\fn)$ of $(R,\fm)$ such that $\fm S=\fn$, $S$ is $F$-finite, $S$ is faithfully flat over $R$, and $S$ has infinite residue field.
For example, letting $\widehat{R}\cong k[[x_1,\cdots, x_m]]/I$ for some ideal $I$, we can pick $S=\bar{k}[[x_1,\cdots,x_m]]/I\bar{k}[[x_1,\cdots, x_m]]$, where $\bar{k}$ denotes the algebraic closure of $k$.
Note, if $L$ is an $R$-module, then it follows that $$(L\otimes_R\up{\varphi_n }R)\otimes_R S\cong (L\otimes_RS)\otimes_S\up{\varphi_n }S\quad(\flat)$$
By base change theorem, the map $$\up{\varphi_n}R\stackrel{\cong}\lo R\otimes_R\up{\varphi_n}R\lo  S\otimes_R\up{\varphi_n}R\stackrel{\cong}\lo \up{\varphi_n }S\quad(+
)$$ is flat. Also, recall that $\Hom_A(B,\omega_A)\stackrel{(\ast)}=\omega_B$, where $A\to B$ is a morphism of local rings of same dimension and equipped with canonical modules so that $B$ is finitely generated as an $A$-module. This gives:

$$  \omega_{\up{\varphi_n }R}\otimes_RS  \cong   \omega_{\up{\varphi_n }R}\otimes_{\up{\varphi_n }R}(\up{\varphi_n }R\otimes_RS)  \stackrel{(\flat)}\cong   \omega_{\up{\varphi_n }R}\otimes_{\up{\varphi_n }R}\up{\varphi_n }S  \stackrel{(+)}\cong  \omega_{\up{\varphi_n }S} \stackrel{(\ast)} \cong  \Hom_S(\up{\varphi_n }S,\omega_S)\quad(\natural)
$$

Due to the assumptions,  we know $\Ext_R^p\big(M,H^{d}_{\fm}(\omega_{\up{\varphi_n }R})\big)=0$ for all $p\gg 0$. Let $(-)^\dagger:=(-)^{\dagger_S}:=\Hom_S(-,\omega_S)$, and
recall that $M$ is finitely generated. 	This in turn implies the second implication of the following:

\begin{eqnarray*}
	\Ext_R^i\big(M,H^{d}_{\fm}( \omega_{\up{\varphi_n }R})\big)=0 & \Longleftrightarrow & \Ext_R^i\big(M,H^{d}_{\fm}( \omega_{\up{\varphi_n }R})\big)\otimes_RS=0
	\\
	& \Longleftrightarrow &  \Ext_S^i\big(M\otimes_RS,H^{d}_{\fm}( \omega_{\up{\varphi_n }R})\otimes_RS\big)=0\\
	& \Longleftrightarrow &  \Ext_S^i\big(M\otimes_RS,H^{d}_{\fm}( \omega_{\up{\varphi_n }R}\otimes_RS)\big)=0\\
	& \stackrel{(\natural)}\Longleftrightarrow & \Ext_S^i\big(M\otimes_RS,H^{d}_{\fm}((\up{\varphi_n}S)^{\dagger_S})\big)=0\\
	& \Longleftrightarrow & \Ext_S^i\big(M\otimes_RS,H^{d}_{\fn}((\up{\varphi_n}S)^{\dagger_S})\big)=0,
\end{eqnarray*}
where in the   third (resp. five) implication  we used the  flat base change (resp.    independence) theorem.
Also, there is an spectral sequence
$$\Ext_S^p(M\otimes_R S,H^{q}_{\fn}(\up{\varphi_n}S)^\dagger)\Rightarrow H^{p+q}_{\fn}(M\otimes_R S,(\up{\varphi_n}S)^\dagger).$$ 
Recall that   $(\up{\varphi_n}S)^\dagger$ is 
Cohen-Macaulay. This shows $$H^{d+q}_{\fn}(M\otimes_R S,(\up{\varphi_n}S)^\dagger)\cong\Ext_R^p\big(M\otimes_R S,H^{d}_{\fn}(\up{\varphi_n}S)^\dagger)\big).$$
So, we conclude that  $\cd(M\otimes_RS,(\up{\varphi_n}S)^{\dagger_S})<\infty$.
One may deduce from $\pd_S(M\otimes_RS)<\infty$ 
that $\pd(M)<\infty$. Also, it follows that ascent through the flat map $R\to S$ implies that $S$ is Cohen-Macaulay (Gorenstein). 
 From these, and by replacing $R$ with $S$, we may and do assume that $R$ is $F$-finite. Now, the desired claim is in Observation \ref{d2}.
\end{proof}

Concerning the next item,
it slightly extends \cite[2.9+2.11]{l},  as we do not assume $R$ is generically Gorenstein:

\begin{observation}
	Suppose $R$ is Cohen-Macaulay, $\underline{x}$ is a system of parameter and $M$ is maximal Cohen-Macaulay. If $\Tor_i^R(M/\underline{x}M,\up{\varphi_n}R)=0$ for some $n\gg0$ then $M$ is free.
	In particular, if $R$ has  canonical module  and $\Tor_i^R(\omega/\underline{x}\omega,\up{\varphi_n}R)=0$ then $R$ is Gorenstein.
\end{observation}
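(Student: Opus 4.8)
The plan is to reduce everything to a finite-length module over an Artinian quotient, where vanishing of a single Frobenius-$\Tor$ is rigid enough to force finite projective dimension, and then to lift back. First I would pass to $A:=R/\underline x R$. Since $R$ is Cohen-Macaulay and $\underline x$ is a system of parameters, $\underline x$ is an $R$-regular sequence, and since $M$ is maximal Cohen-Macaulay the same $\underline x$ is $M$-regular; hence $\dim(M/\underline x M)=0$, so $\overline M:=M/\underline x M$ is a module of \emph{finite length}. Thus the hypothesis $\Tor_i^R(\overline M,\up{\varphi_n}R)=0$ is a statement about a finite-length module, which is exactly the setting in which Frobenius-rigidity is available.

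The second step is a lifting principle: $M$ is free if and only if $\pd_R(\overline M)<\infty$. Indeed, tensoring a minimal free resolution $F_\bullet\to M$ with the Koszul complex $K(\underline x;R)$ (which resolves $A$ because $\underline x$ is $R$-regular) yields a resolution of $\overline M$, acyclic since $\underline x$ is $M$-regular and hence $\Tor_{>0}^R(M,A)=0$. As every differential has entries in $\fm$ — the maps of $F_\bullet$ are minimal and the Koszul maps are multiplication by the $x_j\in\fm$ — this resolution is minimal, whence $\pd_R(\overline M)=\pd_R(M)+d$ with $d:=\dim R$. Consequently $\pd_R(\overline M)<\infty$ forces $\pd_R(M)<\infty$, and then the Auslander–Buchsbaum formula gives $\pd_R(M)=\depth R-\depth M=d-d=0$, i.e. $M$ is free. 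So it suffices to establish $\pd_R(\overline M)<\infty$.

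The heart of the matter — and the step I expect to be the main obstacle — is the Frobenius-rigidity statement: for the finite-length module $\overline M$ over the Cohen-Macaulay ring $R$ of prime characteristic, the vanishing of the single module $\Tor_i^R(\overline M,\up{\varphi_n}R)$ for one $i>0$ and one $n\gg 0$ already forces $\pd_R(\overline M)<\infty$. This is precisely where positive characteristic enters; it rests on a rigidity theorem for $\Tor$ against the Frobenius, in the spirit of the Koh–Lee result \cite[Theorem 2.2.8]{Mi} invoked earlier but in its sharper single-index form valid for modules of finite length. A convenient way to see that the problem genuinely lives over the Artinian ring $A$, where finite projective dimension is the same as freeness and the rigidity input is cleanest, is the change-of-rings isomorphism $\Tor_i^R(\overline M,\up{\varphi_n}R)\cong\Tor_i^A\!\big(\overline M,\,\up{\varphi_n}R/\underline x\up{\varphi_n}R\big)$, legitimate because $\underline x$ acts on $\up{\varphi_n}R$ as the $R$-regular sequence $x_1^{p^n},\dots,x_d^{p^n}$. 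I would then check that this rigidity needs no generic Gorensteinness of $R$: the whole argument is purely \emph{vertical} (reduction modulo the regular sequence $\underline x$) and never localizes at minimal primes, which is exactly the point where it improves on \cite[2.9+2.11]{l}.

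Finally, the ``in particular'' assertion follows by applying the first part to $M=\omega$, the canonical module, which is maximal Cohen-Macaulay over the Cohen-Macaulay ring $R$. The hypothesis then forces $\omega$ to be free, say $\omega\cong R^{t}$. Comparing lengths at any minimal prime $\fp$ — where $\omega_{\fp}$ is the canonical module of the Artinian ring $R_{\fp}$ and hence has the same length as $R_{\fp}$ by Matlis duality — gives $t\cdot\ell(R_{\fp})=\ell(R_{\fp})$, so $t=1$. Thus $\omega\cong R$ and $R$ is Gorenstein; note that the free rank is pinned down \emph{a posteriori}, so no assumption on the generic behaviour of $R$ is required.
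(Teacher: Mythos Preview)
Your outline is workable but it is not the paper's route, and the step you flag as ``the main obstacle'' is exactly where the two diverge. The paper never invokes a Koh--Lee rigidity for $\overline M$. Instead it runs a spectral sequence to swap the two arguments,
\[
\Tor_i^R(M/\underline{x}M,\up{\varphi_n}R)\ \cong\ \Tor_i^R\bigl(M,\up{\varphi_n}R/\underline{x}\,\up{\varphi_n}R\bigr),
\]
(using that $M$ is maximal Cohen--Macaulay and that $\underline{x}$ is $\up{\varphi_n}R$-regular), and then applies the Takahashi--Yoshino splitting \cite[Corollary~3.3]{tay}: for $n\gg 0$ the residue field $k$ is a direct summand of $\up{\varphi_n}R/\underline{x}\,\up{\varphi_n}R$. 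That immediately gives $\Tor_i^R(M,k)=0$, hence $\pd_R M<i$, hence $M$ is free by Auslander--Buchsbaum. No detour through $\pd_R(\overline M)$, no lifting principle, no black-box rigidity.

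On your route: the Koh--Lee theorem \cite[Theorem~2.2.8]{Mi} does give single-vanishing rigidity for $n\gg 0$, and it applies to \emph{any} finitely generated module, so your reduction to finite length is not actually needed for that step. Granting Koh--Lee, your argument goes through. One caution about your change-of-rings aside: the $A$-module $\up{\varphi_n}R/\underline{x}\,\up{\varphi_n}R$ is $R/(x_1^{p^n},\dots,x_d^{p^n})$ with the twisted action, which is \emph{not} $\up{\varphi_n}A$; so passing to $A$ does not place you in a position to quote Frobenius rigidity over $A$. Fortunately your main line does not rely on this. The trade-off between the two approaches: the paper's argument is shorter, and the $k$-splitting makes the mechanism (and the irrelevance of any generic-Gorenstein hypothesis) visible, whereas your version outsources the Frobenius input to a citation.
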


\begin{proof}We may assume that $R$ is of positive dimension.
	Since $\up{\varphi_n}R$ is big Cohen-Macaulay, $\Tor_+^R(R/\underline{x}R,\up{\varphi_n}R)=0$. Then there is a first quadrant spectral
	sequence $$\Tor_i^R(\Tor_j^R(M,R/\underline{x}R),\up{\varphi_n}R)\Rightarrow \Tor_{i+j}^R(M,\up{\varphi_n}R/\underline{x}\up{\varphi_n}R).$$
	Since $M$ is maximal Cohen-Macaulay, $\Tor_j^R(M,R/\underline{x}R)=0$. This means the spectral sequence collapses. From this, $\Tor_{i}^R(M,\up{\varphi_n}R/\underline{x}\up{\varphi_n}R)=\Tor_i^R(M/\underline{x}M,\up{\varphi_n}R)=0$. But we know from \cite[
	Corollary 3.3]{tay} that $k$ is a direct summand of $\up{\varphi_n}R/\underline{x}\up{\varphi_n}R$. This means $\pd(M)<\infty$, and so it is free.
\end{proof}

\begin{corollary}
	Suppose $R$ is a  Cohen-Macaulay   F-finite ring  with canonical module, $\underline{x}$ is a system of parameter and    $M$ is maximal Cohen-Macaulay. For $n\gg 0$, the following are equivalent:
	\begin{itemize}
		\item[$(1) $] $H^i_{\fm}(M/\underline{x}M,(\up{\varphi_n}R)^\dagger) = 0$ for all $i> \dim R$ 
		\item[$(2) $] $H^i_{\fm}(M/\underline{x}M,(\up{\varphi_n}R)^\dagger) = 0$ for some $i> \dim R$	\item[$(3) $] $M$ is free.
		
	\end{itemize}
\end{corollary}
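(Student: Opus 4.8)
The plan is to turn each cohomological condition into a $\Tor$-vanishing statement and then invoke the Observation proved just above. Write $d:=\dim R$ and $(-)^\dagger:=\Hom(-,\omega_R)$. Since $R$ is Cohen-Macaulay and $F$-finite, $\up{\varphi_n}R$ is a maximal Cohen-Macaulay $R$-module, hence so is its canonical dual $(\up{\varphi_n}R)^\dagger$ by \cite[Theorem 3.3.10]{BH}, and the biduality map gives $((\up{\varphi_n}R)^\dagger)^\dagger\cong\up{\varphi_n}R$. First I would record, exactly as in the proof of Proposition \ref{c} via \cite[Lemma 3.4]{HZ} and because $(\up{\varphi_n}R)^\dagger$ is maximal Cohen-Macaulay, the isomorphism
$$H^{d+j}_{\fm}\big(M/\underline{x}M,(\up{\varphi_n}R)^\dagger\big)^\vee\cong\Tor^R_j\big(M/\underline{x}M,((\up{\varphi_n}R)^\dagger)^\dagger\big)\cong\Tor^R_j(M/\underline{x}M,\up{\varphi_n}R)$$
for every $j\geq 0$. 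Since the Matlis functor is faithful, this dictionary identifies the vanishing of $H^{i}_{\fm}(M/\underline{x}M,(\up{\varphi_n}R)^\dagger)$ in degree $i=d+j$ with that of $\Tor^R_j(M/\underline{x}M,\up{\varphi_n}R)$, and the degrees $i>d$ correspond precisely to $j>0$.

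Granting this, I would prove the cycle $(1)\Rightarrow(2)\Rightarrow(3)\Rightarrow(1)$. The implication $(1)\Rightarrow(2)$ is immediate, taking $i=d+1$. For $(3)\Rightarrow(1)$: if $M$ is free then $M/\underline{x}M\cong(R/\underline{x}R)^{\oplus r}$, and since $\up{\varphi_n}R$ is maximal Cohen-Macaulay the system of parameters $\underline{x}$ is a regular sequence on it, so $\Tor^R_j(R/\underline{x}R,\up{\varphi_n}R)=0$ for all $j>0$; hence $\Tor^R_j(M/\underline{x}M,\up{\varphi_n}R)=0$ for $j>0$ and the displayed isomorphism yields $H^{i}_{\fm}(M/\underline{x}M,(\up{\varphi_n}R)^\dagger)=0$ for every $i>d$, which is $(1)$.

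The substantive direction is $(2)\Rightarrow(3)$, where the hypothesis $n\gg0$ and the rigidity furnished by the Frobenius enter. Given vanishing in a single degree $i>d$, put $j:=i-d>0$; the displayed isomorphism gives $\Tor^R_j(M/\underline{x}M,\up{\varphi_n}R)=0$, which is exactly the hypothesis of the preceding Observation. Its proof collapses a spectral sequence to identify this with $\Tor^R_j(M,\up{\varphi_n}R/\underline{x}\up{\varphi_n}R)$, then uses \cite[Corollary 3.3]{tay} to split off $k$ as a direct summand of $\up{\varphi_n}R/\underline{x}\up{\varphi_n}R$ for $n\gg0$; this forces $\Tor^R_j(M,k)=0$ in the single degree $j>0$, so $\pd_R(M)<\infty$, and the Auslander--Buchsbaum formula together with maximal Cohen-Macaulayness gives that $M$ is free. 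This closes the cycle. The only point requiring genuine care is the opening step: checking that $(\up{\varphi_n}R)^\dagger$ is maximal Cohen-Macaulay and that $\up{\varphi_n}R$ is reflexive, so that the $\Tor$-translation is legitimate; after that everything is a direct appeal to results already in hand.
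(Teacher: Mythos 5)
Your argument is correct and coincides with the paper's proof in its essential step: you translate the hypothesis through the duality $H^{d+j}_{\fm}(M/\underline{x}M,(\up{\varphi_n}R)^\dagger)^{\vee}\cong\Tor^R_j(M/\underline{x}M,(\up{\varphi_n}R)^{\dagger\dagger})\cong\Tor^R_j(M/\underline{x}M,\up{\varphi_n}R)$ (legitimate because $(\up{\varphi_n}R)^\dagger$ is maximal Cohen--Macaulay and $\up{\varphi_n}R$ is $\omega$-bidual by \cite[Theorem 3.3.10]{BH}) and then feed the single Tor-vanishing into the preceding Observation, exactly as the paper does for $(2)\Rightarrow(3)$. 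The only divergence is the easy direction $(3)\Rightarrow(1)$, where the paper deduces $\pd(M/\underline{x}M)<\infty$ from the Koszul complex and cites \cite[Theorem 3.2]{HZ}, whereas you run the Tor-dictionary backwards using that $\underline{x}$ is a regular sequence on the maximal Cohen--Macaulay module $\up{\varphi_n}R$; both work, and your indexing $j:=i-\dim R$ is the correct reading of the paper's typo $j:=\dim R-i$.
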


\begin{proof}$(1)\Rightarrow (2)$: Trivial.
	
$(2)\Rightarrow (3)$:	Set $j:=\dim R- i$.
Recall from the proof of Proposition \ref{c} that
$$\Tor_j^R(M/\underline{x}M,\up{\varphi_n}R)=\Tor_j^R(M/\underline{x}M,(\up{\varphi_n}R)^{\dagger\dagger})=H^i_{\fm}(M/\underline{x}M,(\up{\varphi_n}R)^\dagger)^{\vee} = 0.$$ 
Now, we apply the previous observation to deduce $M$ is free.

$(3)\Rightarrow (1)$: Since $M$ is free, thanks to Koszul complex with respect to $\underline{x}$ over $M$, we know $\pd(M/\underline{x}M)<\infty$. This implies that $\cd(M/\underline{x}M,(\up{\varphi_n}R)^\dagger)<\infty$, and so $(1)$ easily follows,  for instance see   \cite[Theorem 3.2]{HZ}.
\end{proof}

\section{Vanishing of $\Ext^\ast(\up{\varphi_n}R,-)$}
We say $M$ is (strongly) tor-rigid, if $\Tor_i^R(M,N)=0$ then 
$\Tor_{>i}^R(M,N)=0$ when $N$ is (not necessarily) finitely generated. Also, $M$ is called e-rigid if $\Ext_R^1(M,M)=0$. Recall that if $\depth(R)=1$ then $\up{\varphi_n}R$  is tor-rigid (see \cite{Mi}), so the following may be considered as higher analogue of \cite[1.4]{two}.
\begin{proposition}
	Let $R$ be $(S_2)$, F-finite and reduced. If
	$\up{\varphi_n}R$ is e-rigid and strongly tor-rigid, then $R$ is regular.
\end{proposition}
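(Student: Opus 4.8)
The plan is to show that the hypotheses force $M:=\up{\varphi_n}R$ to be free, whence $R$ is regular by Kunz's theorem \cite{Ku}. I would argue by induction on $d:=\dim R$, first reducing to the case of an isolated singularity and then finishing at the closed point by the method of Corollary \ref{a}.

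For the base of the induction, a reduced local ring of dimension $0$ is a field, and if $d=1$ then $(S_2)$ forces $\depth R=1$, so $\up{\varphi_n}R$ is automatically tor-rigid and the conclusion is exactly \cite[1.4]{two}. For the inductive step I would check that all the hypotheses pass to the localizations $R_\fp$ with $\fp\neq\fm$: reducedness and F-finiteness localize trivially, $(\up{\varphi_n}R)_\fp\cong\up{\varphi_n}(R_\fp)$, e-rigidity localizes because $\Ext^1_R$ of finitely generated modules commutes with localization, and strong tor-rigidity localizes because every $R_\fp$-module is an $R$-module on which the two $\Tor$'s agree. Since $\dim R_\fp<d$, the inductive hypothesis gives that $R_\fp$ is regular for every non-maximal $\fp$; that is, $R$ has at worst an isolated singularity, and by Kunz's theorem $M$ is then locally free over the punctured spectrum.

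Now I would finish at the closed point. Since $R$ is $(S_2)$ and $M$ is torsion-free (reducedness makes $\varphi$ injective) with the same depth profile as $R$, $M$ is reflexive; and since $M$ is locally free on the punctured spectrum, the modules $\Ext^{>0}_R(M,M)$ and $\Tor^R_{>0}(M,M^\ast)$ have finite length. The key move is to convert the e-rigidity $\Ext^1_R(M,M)=0$ into a single Tor-vanishing $\Tor^R_1(M,M^\ast)=0$, using the standard Auslander--Bridger exact sequences linking $\Ext^1_R(M,M)$, the evaluation map $M^\ast\otimes_R M\to\Hom_R(M,M)$, and the transpose $\Tr M$: reflexivity kills the defect terms $\Ext^i_R(\Tr M,R)$, while local freeness on the punctured spectrum controls the remaining finite-length corrections. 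Strong tor-rigidity then propagates this upward to $\Tor^R_{>0}(M,M^\ast)=0$.

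With $M$ reflexive, locally free on the punctured spectrum, and Tor-independent from $M^\ast$, a depth-formula argument should give $\depth(M\otimes_R M^\ast)\geq\min(2,d)=2$, hence $H^1_\fm(M\otimes_R M^\ast)=0$; applying \cite[8.1]{ACS} exactly as in Corollary \ref{a} yields that $M$ is free, and Kunz's theorem completes the proof. I expect the main obstacle to be this third paragraph: extracting $\Tor^R_1(M,M^\ast)=0$ from the lone vanishing $\Ext^1_R(M,M)=0$ without the comfort of a Cohen--Macaulay or canonical-module structure, together with validating the accompanying depth formula in the merely $(S_2)$ setting. An alternative that avoids $M^\ast$ would be to use strong tor-rigidity to force $\pd_R M<\infty$ and then invoke Herzog's theorem \cite{hhe}, but producing even a single vanishing $\Tor^R_i(M,k)=0$ out of e-rigidity appears to be no easier.
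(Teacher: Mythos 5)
Your first two paragraphs (base cases and passage to the punctured spectrum) are sound, and in fact prove more than is needed: the paper only localizes at height-one primes, uses the $d\le 1$ case there to get $(R_1)$, and combines this with $(S_2)$ to conclude $R$ is normal; it never reduces to an isolated singularity. The divergence, and the gap, is in your endgame. First, there is no clear route from the single vanishing $\Ext^1_R(M,M)=0$ to $\Tor^R_1(M,M^\ast)=0$ of the kind you sketch. The relevant Auslander--Bridger four-term sequence, with $T:=\Tr(\Syz_1(M))$, reads
$$\Tor^R_2(T,M)\lo\Ext^1_R(M,R)\otimes_RM\lo\Ext^1_R(M,M)\lo\Tor^R_1(T,M)\lo 0,$$
so its natural output concerns $\Ext^1_R(M,R)$ and $\Tor_\ast(T,M)$, not $\Tor_1(M,M^\ast)$; the identifications of the form $\Tor^R_i(M,N^\ast)\cong\Ext^{d+i}_R(M,N)^{\vee}$ that would connect Ext of $(M,M)$ to Tor of $(M,M^\ast)$ live in cohomological degree $d+i$, require Cohen--Macaulay and local-freeness hypotheses, and involve the top Ext modules rather than $\Ext^1$. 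Second, even granting $\Tor^R_{>0}(M,M^\ast)=0$, the depth formula you invoke is only known under extra structure (one factor of finite projective dimension, complete intersections, or AB rings as in Corollary \ref{a} via \cite[Corollary 5.3(b)]{depth}); over a ring that is merely $(S_2)$, F-finite and reduced it is not available, and $M=\up{\varphi_n}R$ need not even be maximal Cohen--Macaulay, so the hypotheses of \cite[8.1]{ACS} are also in doubt. Both of the steps you flag as ``main obstacles'' are therefore genuine gaps, not technicalities.

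For comparison, the paper closes the argument differently and avoids both problems. It uses the displayed sequence together with tor-rigidity (from $\Ext^1_R(M,M)=0$ one gets $\Tor^R_1(T,M)=0$, hence $\Tor^R_2(T,M)=0$) to squeeze $\Ext^1_R(M,R)\otimes_RM$ between two zeros, so $\Ext^1_R(M,R)=0$ by Nakayama. Matlis duality converts this into $\Tor^R_1(M,E(k))=0$, and \emph{strong} tor-rigidity --- applied to the non-finitely-generated module $E(k)$, which is the one place where ``strongly'' is indispensable --- yields $\Tor^R_{+}(M,E(k))=0$, hence $\Ext^{+}_R(M,R)=0$ and similarly $\Ext^{+}_R(M,M)=0$. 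Freeness of $M$ then follows from the known case of the Auslander--Reiten conjecture over normal rings \cite[3.13(3)]{ta}, with no depth formula and no tensor product $M\otimes_RM^\ast$ entering at all; Kunz's theorem finishes as you intended.
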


Instead of the e-rigidity property of $\up{\varphi_n}R$, and throughout this section, one may assume only that $\Ext^1_R(\up{\varphi_n}R,\up{\varphi_m}R)=0$ for some $m$. 
\begin{proof} Let $d:=\dim R$. There is nothing to prove if  $d=0$, as the ring is assumed to be reduced.
	If $\dim R= 1$ the claim is in \cite[1.4]{two} that we do not need the tor-rigidity assumption. Recall that $(\up{\varphi_n}R)_{\fp}=\up{\varphi}(R_{\fp})$ and that
	$\up{\varphi}(R_{\fp})$ is e-rigid. Apply this for any height one prime ideal, we deduce that $R$ satisfies $(R_1)$. Following Serre's characterization of normality, we observe that $R$ is normal. By $\Tr(-)$ we mean the Auslander's transpose. Set $T:=\Tr(\Syz_1(\up{\varphi_n}R))$, and look at the following exact sequence taken from \cite{ABu} \begin{equation} 
	\Tor_2^R(T,\up{\varphi_n}R)\rightarrow\Ext^1_R(\up{\varphi_n}R,R)\otimes_R \up{\varphi_n}R\rightarrow\Ext^1_R(\up{\varphi_n}R,\up{\varphi_n}R)\rightarrow\Tor_1^R(T,\up{\varphi_n}R)\rightarrow 0.
	\end{equation} Recall that the ext-rigidity means
	$\Ext^1_R(\up{\varphi_n}R,\up{\varphi_n}R)=0$. So, $\Tor_1^R(T,\up{\varphi_n}R) =0$. Recall that the tor-rigidity implies that $\Tor_2^R(T,\up{\varphi_n}R) =0$. Due to the displayed exact sequence $\Ext^1_R(\up{\varphi_n}R,R)\otimes_R \up{\varphi_n}R=0$.
	By Nakayama's lemma, $\Ext^1_R(\up{\varphi_n}R,R)=0$.
In the light of   Matlis duality, we observe
	$$0=\Ext^1_R(\up{\varphi_n}R,R)^{ \vee}=\Ext^1_R(\up{\varphi_n}R,E(k)^{ \vee})^{ \vee}=\Tor^R_1(\up{\varphi_n}R,E(k)).$$ Thanks to tor-rigidity, one may deduce that $\Tor^R_+(\up{\varphi_n}R,E(k))=0$. By Matlis duality, $\Ext^+_R(\up{\varphi_n}R,R)=0$. Similarly, $\Ext^+_R(\up{\varphi_n}R,\up{\varphi_n}R)=0$.
	According to Auslander–Reiten over normal rings, see \cite[3.13(3)]{ta}, we conclude that $\up{\varphi_n}R$ is free. Recall from  Kunz \cite{Ku}  that $R$ is regular if and only if $\up{\varphi^r}R$ is a flat $R$-module. The proof is now completed.
\end{proof}
Strongly tor-rigid modules with dimension $<\dim R-1$ are so special:
\begin{remark}
	Suppose $R$ is Cohen-Macaulay and let $M$ be a finitely generated module such that $\dim M<\dim R-1$. Then $M$ is not strongly tor-rigid.
\end{remark}

\begin{proof}Let $d:=\dim R$, $d_0:=d-\dim M$ and 
 $d_1:=d-\dim M-1$.
	The  Check complex provides  a flat resolution of $H:=H^d_{\fm}(R)$. So, $\Tor^R_{d_1}(M,H )=H^{\dim M+1}_{\fm}(M)=0$,
but	 $\Tor^R_{d_0}(M,H )=H^{\dim M}_{\fm}(M)\neq 0$ and $d_0>d_1$.
\end{proof}
\begin{corollary}\label{dz}
Suppose $\depth(R)=0$, it is $F$-finite and
	$\up{\varphi_n}R$ is e-rigid for $n\gg 0$. Then $R$ is a field.
\end{corollary}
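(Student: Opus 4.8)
The plan is to show that, for all sufficiently large $n$, the residue field $k$ is a direct summand of the $R$-module $\up{\varphi_n}R$; once this is available, e-rigidity finishes everything. Indeed, if $\up{\varphi_n}R\cong k\oplus M'$, then $\Ext^1_R(k,k)$ is a direct summand of $\Ext^1_R(\up{\varphi_n}R,\up{\varphi_n}R)$, so the hypothesis $\Ext^1_R(\up{\varphi_n}R,\up{\varphi_n}R)=0$ forces $\Ext^1_R(k,k)=0$. Since $\Ext^1_R(k,k)\cong\Hom_k(\fm/\fm^2,k)$, this vanishing means $\fm=\fm^2$, whence $\fm=0$ by Nakayama's lemma; that is, $R$ is a field.

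To produce the splitting I would use the elementary observation that a copy of $k$ splits off an $R$-module $M$ as soon as $\Soc(M)\not\subseteq\fm M$: any element of $\Soc(M)\setminus\fm M$ is a minimal generator annihilated by $\fm$, and a $k$-linear functional on $M/\fm M$ that is nonzero on its class supplies a retraction onto the corresponding copy of $k$. I would then apply this with $M=\up{\varphi_n}R$. Recalling that the underlying set is $R$ with twisted action $a\cdot x=a^{p^n}x$, one identifies $\fm\cdot\up{\varphi_n}R=\fm^{[p^n]}$ and $\Soc(\up{\varphi_n}R)=(0:_R\fm^{[p^n]})$, so it suffices to exhibit, for $n\gg0$, an element of $(0:_R\fm^{[p^n]})$ that does not lie in $\fm^{[p^n]}$.

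This is exactly where the depth hypothesis enters. Since $\depth R=0$, I may choose $0\neq s\in(0:_R\fm)$; as $\fm^{[p^n]}\subseteq\fm$, this $s$ lies in $(0:_R\fm^{[p^n]})$ for every $n$. On the other hand $\bigcap_n\fm^{[p^n]}\subseteq\bigcap_n\fm^n=0$ by Krull's intersection theorem, so $s\notin\fm^{[p^N]}$ for some $N$, and hence $s\notin\fm^{[p^n]}$ for all $n\ge N$ because $\fm^{[p^n]}\subseteq\fm^{[p^N]}$. For any such $n$ the element $s$ witnesses $\Soc(\up{\varphi_n}R)\not\subseteq\fm\up{\varphi_n}R$, yielding the desired summand; choosing $n$ large enough to also lie in the e-rigid range then completes the proof. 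Note that F-finiteness is used only to ensure $\up{\varphi_n}R$ is finitely generated, so that e-rigidity is the usual notion.

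The part requiring the most care is the bookkeeping under the twisted module structure, namely the identifications $\fm\cdot\up{\varphi_n}R=\fm^{[p^n]}$ and $\Soc(\up{\varphi_n}R)=(0:_R\fm^{[p^n]})$, together with the verification that a single fixed socle element escapes $\fm^{[p^n]}$ uniformly for all large $n$; the rest is formal. The point worth emphasizing is that this route sidesteps the transpose exact sequence and the tor-rigidity input used in the preceding proposition, which is precisely what lets the lone assumption $\depth R=0$ suffice.
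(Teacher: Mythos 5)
Your proof is correct, and it takes a genuinely different route from the paper's. The paper deduces from e-rigidity, via the Auslander--Bridger transpose exact sequence, that $\Tor_1^R(\Tr(\Syz_1(\up{\varphi_n}R)),\up{\varphi_n}R)=0$, then invokes Koh--Lee to get $\pd(\Syz_1(\up{\varphi_n}R)^\ast)<\infty$, the Auslander--Buchsbaum formula (here depth zero enters) to get freeness, Ramras's theorem to transfer freeness back to $\Syz_1(\up{\varphi_n}R)$, and finally Kunz's theorem to conclude regularity. You instead split a copy of $k$ off $\up{\varphi_n}R$ directly: the identifications $\fm\cdot\up{\varphi_n}R=\fm^{[p^n]}$ and $\Soc(\up{\varphi_n}R)=(0:_R\fm^{[p^n]})$ are right, a fixed socle element of $R$ (which exists since $\depth R=0$) lies in every $\Soc(\up{\varphi_n}R)$ yet escapes $\fm^{[p^n]}$ for $n\gg 0$ by Krull intersection, and a minimal socle generator always splits off a copy of $k$; then $\Ext^1_R(k,k)=0$ forces $\fm=\fm^2=0$. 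Your argument is more elementary and self-contained (no transpose, no Koh--Lee, no Ramras, no Kunz), it isolates exactly where $\depth R=0$ and $n\gg 0$ are used, and it proves the stronger intermediate fact that $k$ is a direct summand of $\up{\varphi_n}R$ for $n\gg 0$ over any depth-zero $F$-finite local ring, which in fact yields $\Ext^i_R(\up{\varphi_n}R,\up{\varphi_n}R)\neq 0$ for every $i$ unless $R$ is a field. What the paper's heavier route buys is reusability: the intermediate conclusion $\pd((\Syz_1(\up{\varphi_n}R))^\ast)<\infty$ is cited again in the subsequent complete-intersection corollary, where depth is positive and your socle trick is no longer available.
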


\begin{proof}
It follows from the previous argument that $\Tor_1^R(\Tr(\Syz_1(\up{\varphi_n}R)),\up{\varphi_n}R) =0$. By the work of Koh-Lee \cite[Theorem 2.2.8]{Mi},
$\pd(\Tr(\Syz_1(\up{\varphi_n}R)))<\infty$. As  $(\Syz_1(\up{\varphi_n}R))^\ast$ is the second syzygy of  $\Tr(\Syz_1(\up{\varphi_n}R))$ we deduce that
$\pd((\Syz_1(\up{\varphi_n}R))^\ast)<\infty$. Thanks to Auslander-Buchsbaum formula, we know $\Syz_1(\up{\varphi_n}R)^\ast$ is free. Again, we are going to use the depth-zero assumption. Namely, following Ramras \cite[2.6]{ram}, we know $\Syz_1(\up{\varphi_n}R)$ is free. So, $\pd(\up{\varphi_n}R) <\infty$. In the light of Kunz \cite{Ku} we observe that $R$ is regular, and so it is field.
\end{proof}
The ring  $R$  is called   $(G_1)$, if $R$  is
Gorenstein in codimension less than or equal to one.  Recall that $R$  quasi-normal means $(S_2)+(G_1)$. Concerning the next result, part $(c)$ is well-known (see \cite{ev}):

\begin{fact} Assume one of the following:
\begin{enumerate}[(a)]
	\item   $M$ is reflexive and $\Ext^1_R(M,R)=0$.
	
	\item $R$ is Gorenstein and $M$ is almost Cohen-Macaulay.
	\item Let $M$ be torsionless and  $R$ be quasi-normal.
\end{enumerate}
Then  $\Syz_1(M)$ is reflexive.
\end{fact}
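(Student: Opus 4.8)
The plan is to treat all three cases from the common starting point of a short exact sequence
$$0\lo N\lo F\lo M\lo 0,\qquad N:=\Syz_1(M),$$
with $F$ finitely generated free, and recall that the canonical biduality map $\sigma_L\colon L\to L^{\ast\ast}$ is an isomorphism whenever $L$ is free. I would establish reflexivity of $N$ either by a direct double-dualization argument (case (a)) or by showing that $N$ is a \emph{second syzygy}, equivalently that it satisfies Serre's condition $(\widetilde{S}_2)$, and then invoking the standard reflexivity criterion over the relevant class of rings (cases (b) and (c)).

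For (a), since $\Ext^1_R(M,R)=0$ and $F$ is free, applying $\Hom_R(-,R)$ to the sequence gives a short exact sequence $0\to M^\ast\to F^\ast\to N^\ast\to 0$. Dualizing once more yields $0\to N^{\ast\ast}\to F^{\ast\ast}\to M^{\ast\ast}\to \Ext^1_R(N^\ast,R)\to 0$, and the canonical maps $\sigma_N,\sigma_F,\sigma_M$ assemble into a commutative ladder between the original sequence and this one. Because $\sigma_F$ is an isomorphism and $\sigma_M$ is injective (indeed bijective, $M$ being reflexive), a short diagram chase shows that $\sigma_N$ is both injective and surjective, so $N$ is reflexive. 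This step is purely formal once the ladder is written down.

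For (b) I would argue by depth. Over the Gorenstein — hence Cohen--Macaulay, catenary, equidimensional — ring $R$ of dimension $d$, almost Cohen--Macaulayness of $M$ provides $\depth M\ge d-1$; since $\depth F=d$, the depth lemma applied to the sequence gives $\depth N\ge\min\{d,\depth M+1\}=d$, so $N$ is maximal Cohen--Macaulay. A maximal Cohen--Macaulay module over a Gorenstein ring is reflexive, since for it $(-)^{\ast}=\Hom_R(-,\omega_R)$ is a duality (see \cite[Theorem 3.3.10]{BH}); equivalently it satisfies $(\widetilde{S}_2)$ locally, as $\depth N_{\fp}=\Ht\fp\ge\min\{2,\Ht\fp\}$ for every $\fp$. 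If one only wishes to use the pointwise bound $\depth M_{\fp}\ge\Ht\fp-1$ (which follows from $\depth M\ge d-1$ together with $\depth M\le\depth M_{\fp}+\dim R/\fp$ and $\dim R/\fp=d-\Ht\fp$), the same computation yields $\depth N_{\fp}\ge\min\{2,\Ht\fp\}$ at every prime, i.e. $(\widetilde{S}_2)$, whence reflexivity over the Gorenstein ring again follows.

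For (c) the key observation is that a torsionless module is exactly a first syzygy: an embedding $M\hookrightarrow F_0$ into a free module makes the composite $F\twoheadrightarrow M\hookrightarrow F_0$ exhibit $N=\ker(F\to F_0)$ as a \emph{second} syzygy. Over a quasi-normal ring, i.e. one satisfying $(S_2)+(G_1)$ (Gorenstein in codimension $\le1$), second syzygies are reflexive; this is precisely the content of the Evans--Griffith theory \cite{ev} characterizing $k$-th syzygies by $(\widetilde{S}_k)$ under the hypothesis ``$(S_k)$ plus Gorenstein in codimension $<k$'', applied with $k=2$. I expect the main obstacle to lie in case (b): making the meaning of ``almost Cohen--Macaulay'' precise and guaranteeing the depth estimate at \emph{every} prime (in particular excluding low-dimensional associated primes of large height, which is exactly where the hypothesis $\depth M\ge d-1$ is needed); case (c) then reduces to correctly quoting the Evans--Griffith equivalence, and case (a) is a routine diagram chase.
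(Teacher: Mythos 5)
Your argument is correct and takes essentially the same route as the paper's proof in all three cases: (a) is the same biduality ladder/diagram chase using $\Ext^1_R(M,R)=0$ to dualize the presentation, (c) is the same appeal to Evans--Griffith (torsionless $=$ first syzygy, so $\Syz_1(M)$ is a second syzygy, hence reflexive over a quasi-normal ring), and (b) rests on the same reduction, namely that the syzygy of an almost Cohen--Macaulay module is maximal Cohen--Macaulay. The only cosmetic difference is in (b), where the paper concludes via finite $G$-dimension and Auslander--Bridger (a maximal Cohen--Macaulay module of finite $G$-dimension is totally reflexive) rather than via the canonical-module duality or the $(\widetilde{S}_2)$ criterion you invoke; over a Gorenstein ring these are equivalent and equally standard.
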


\begin{proof}
$a)$ There is a free module $F$ and an exact sequence  $0\to M^\ast \to F^\ast\to\Syz_1(M)^\ast \to \Ext^1_R(M,R)=0$. This yields
	$$\begin{CD}
0@>>> \Syz_1(M)^{\ast\ast} @> >> F^{\ast\ast} @>>> M^{\ast\ast}\\
@.@AAA\cong @AAA = @AAA   \\
0@>>> \Syz_1(M) @>>> F @>>>M@>>>0,\\
\end{CD}	$$	
which shows $\Syz_1(M)$ is reflexive.

b): As	$\Syz_1(M)$ is maximal Cohen-Macaulay,
and of finite $G$-dimension, by Auslander-Bridger should be totally reflexive.

c): As $M$ is first syzygy, $\Syz_1(M)$ is second syzygy and so reflexive, see \cite[Theorem 3.6]{ev}.
\end{proof}

\begin{corollary}
	Suppose $R$ is complete intersection, $F$-finite and
	$\up{\varphi_n}R$ is e-rigid for some $n> 0$. Then $R$ is regular.
\end{corollary}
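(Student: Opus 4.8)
The plan is to reduce the complete-intersection case to the normal case already treated in the preceding proposition. First I would observe that complete-intersection rings are Cohen--Macaulay and, since they satisfy Serre's condition $(S_2)$ automatically, the only missing hypothesis of the preceding proposition is reducedness together with the tor-rigidity of $\up{\varphi_n}R$. So the strategy splits into two pieces: verify that a complete-intersection ring for which $\up{\varphi_n}R$ is e-rigid must be reduced, and upgrade e-rigidity of $\up{\varphi_n}R$ over complete-intersection rings to the full strongly tor-rigid hypothesis that the proposition demands.

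For the tor-rigidity, I would invoke the complete-intersection structure directly. Over a complete intersection, modules have eventually periodic or polynomially bounded resolutions, and the vanishing of a single $\Tor$ often propagates; more precisely, the reference \cite{Mi} (Koh--Lee, via the work used repeatedly above) gives that $\up{\varphi_n}R$ is strongly tor-rigid in this setting, so the required rigidity is free of charge. This is the step I expect to be the genuine engine of the argument, and it is exactly what lets us apply the previous proposition wholesale rather than re-running its internal computation.

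For reducedness, I would argue that if $R$ is a complete intersection that is not reduced, then it fails $(R_0)$, and localizing at a minimal prime $\fp$ with $R_\fp$ non-reduced would contradict the e-rigidity of $\up{\varphi_n}R$: since $(\up{\varphi_n}R)_\fp = \up{\varphi_n}(R_\fp)$ and e-rigidity localizes, we would get $\Ext^1_{R_\fp}(\up{\varphi_n}(R_\fp),\up{\varphi_n}(R_\fp))=0$ over an Artinian non-field, which the depth-zero Corollary \ref{dz} forbids (it forces $R_\fp$ to be a field). Hence $R$ satisfies $(R_0)$, and being $(S_1)$ as well, it is reduced.

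Once reducedness and strong tor-rigidity are in hand, the hypotheses of the earlier proposition (namely $(S_2)$, F-finiteness, reducedness, e-rigidity, strong tor-rigidity of $\up{\varphi_n}R$) are all met, and that proposition delivers that $R$ is regular, completing the proof. The main obstacle I anticipate is cleanly supplying the strong tor-rigidity from the complete-intersection and e-rigidity hypotheses alone; if the cited rigidity result in \cite{Mi} does not apply verbatim, one would instead use the finiteness of the complexity/tor-index over complete intersections to show that vanishing of $\Tor_1$ in the key exact sequence already forces vanishing of $\Tor_2^R(T,\up{\varphi_n}R)$, which is all the earlier argument actually consumes.
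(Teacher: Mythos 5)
Your reduction to the earlier Proposition is not the route the paper takes, and it has a real gap at exactly the point you flag as the ``genuine engine'': the strong tor-rigidity of $\up{\varphi_n}R$. The rigidity theorem for complete intersections in \cite{Mi} (Avramov--Miller, and the Koh--Lee theorem used elsewhere in the paper) is a statement about \emph{finitely generated} test modules: $\Tor_i^R(M,\up{\varphi_n}R)=0$ for a single $i>0$ forces $\pd_R(M)<\infty$. That is not the ``strongly tor-rigid'' hypothesis of the Proposition, which demands rigidity against arbitrary modules, and the Proposition's proof genuinely consumes the non-finitely-generated case in the step $\Tor_1^R(\up{\varphi_n}R,E(k))=0\Rightarrow\Tor_+^R(\up{\varphi_n}R,E(k))=0$. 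Your fallback is also not sound as stated: finiteness of the tor-index does not propagate a single vanishing forward (it only bounds where an eventually-vanishing string of Tor's must die), and over a codimension-$c$ complete intersection one needs $c+1$ consecutive vanishings for general pairs of modules --- it is the special nature of the Frobenius, not tor-index finiteness, that makes one vanishing suffice. Moreover it is not true that $\Tor_2^R(T,\up{\varphi_n}R)=0$ is ``all the earlier argument actually consumes.'' (In fairness, over a Gorenstein ring the $E(k)$ step is vacuous, since $\up{\varphi_n}R$ is maximal Cohen--Macaulay and hence $\Ext^{>0}_R(\up{\varphi_n}R,R)=0$ automatically; but you would need to say this and still rework the passage to $\Ext^{+}_R(\up{\varphi_n}R,\up{\varphi_n}R)=0$.) Your reducedness argument by localizing at minimal primes is fine, modulo the same tension between ``$n\gg0$'' in Corollary \ref{dz} and ``some $n>0$'' here; in the complete-intersection case one should invoke the Avramov--Miller version, valid for every $n>0$.

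The paper instead argues directly and never needs reducedness, normality, or strong tor-rigidity. From the Auslander--Bridger exact sequence, e-rigidity gives $\Tor_1^R(T,\up{\varphi_n}R)=0$ for the finitely generated module $T=\Tr(\Syz_1(\up{\varphi_n}R))$; the complete-intersection rigidity theorem then yields $\pd_R(T)<\infty$, hence $\pd_R((\Syz_1(\up{\varphi_n}R))^\ast)<\infty$, and this dual is maximal Cohen--Macaulay, hence free by Auslander--Buchsbaum. Since $\Syz_1(\up{\varphi_n}R)$ is reflexive (a syzygy of the maximal Cohen--Macaulay, hence reflexive, module $\up{\varphi_n}R$ over a Gorenstein ring), it coincides with its double dual and is therefore free, so $\pd_R(\up{\varphi_n}R)<\infty$ and Kunz's theorem gives regularity. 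If you want to salvage your outline, replace the appeal to the Proposition by this direct endgame.
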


\begin{proof}Let $d:=\dim R$. Thanks to 
Corollary \ref{dz} we may assume $d>0$. The case $d=1$ is subject of	\cite[1.4]{two}.
So, we may assume $d>1$. This is in the above argument that  $\pd((\Syz_1(\up{\varphi_n}R))^\ast)<\infty$. As $\up{\varphi_n}R$ is maximal Cohen-Macaulay, its syzygy, and the dual of its syzygy is maximal Cohen-Macaulay. By Auslander-Buchsbaum,
$(\Syz_1(\up{\varphi_n}R))^\ast$ is free.	So, 
$(\Syz_1(\up{\varphi_n}R))^{\ast\ast}$ is free. But, recall that the claim is clear in dimension one, so:
\begin{enumerate}[(a)]
	\item $(\up{\varphi_n}R)_{\fp}$ is reflexive for all $\fp$ with  $\depth R_{\fp} \leq 1$, and
	\item $\depth (\up{\varphi_n}R)_{\fp} \geq 2$ for all $\fp$ with  $\depth R_{\fp} \geq 2$.
	\end{enumerate}

From this, or even without this, $\up{\varphi_n}R$ is reflexive. It turns out that $(\Syz_1(\up{\varphi_n}R))$ is reflexive. Consequently, $\Syz_1(\up{\varphi_n}R)$ is free.
Therefore, $\pd(\up{\varphi_n}R) <\infty$. By Kunz \cite{Ku}, $R$ is regular.
\end{proof}

The self-dual property of $\up{\varphi_n}R$ was asked in \cite{hhe} and studied in \cite{gotoo}. One may ask the reflexivity problem of $\up{\varphi_n}R$. In the positive side:
\begin{corollary}
	Suppose $R$ is quasi-normal and F-finite. Then  $\up{\varphi_n}R$ is reflexive.
\end{corollary}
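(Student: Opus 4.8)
The plan is to show that $\up{\varphi_n}R$ is reflexive by reducing to a local check at primes of small codimension and then invoking the $(S_2)$ condition to propagate reflexivity outward. Recall that a finitely generated module $M$ over a ring satisfying $(S_2)$ is reflexive precisely when $M_\fp$ is reflexive for all $\fp$ with $\depth R_\fp\leq 1$, together with a depth estimate $\depth M_\fp\geq 2$ at the remaining primes; this is the standard Serre-type criterion for reflexivity, and it is exactly the mechanism that appeared in the proof of the complete-intersection corollary above. So first I would set $d:=\dim R$ and dispose of the trivial cases $d\leq 1$, where the statement is either vacuous or follows from torsionlessness of $\up{\varphi_n}R$ over a reduced (indeed normal) ring.

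Next I would exploit the hypothesis that $R$ is quasi-normal, i.e. $(S_2)+(G_1)$. The key point is that at every prime $\fp$ with $\Ht(\fp)\leq 1$, the ring $R_\fp$ is Gorenstein by the $(G_1)$ assumption. Since localization commutes with Frobenius, $(\up{\varphi_n}R)_\fp\cong \up{\varphi_n}(R_\fp)$, and over a Gorenstein local ring $\up{\varphi_n}(R_\fp)$ is self-dual (this is \cite[1.1]{gotoo}, already cited in the excerpt), hence in particular reflexive. This handles condition (a) of the reflexivity criterion: $(\up{\varphi_n}R)_\fp$ is reflexive whenever $\depth R_\fp\leq 1$.

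For condition (b), I would use that $\up{\varphi_n}R$ is a finitely generated faithful $R$-module whose depth properties mirror those of $R$ itself: because $\up{\varphi_n}R$ agrees with $R$ as a set and the module structure only reindexes scalars, one has $\depth_{R_\fp}(\up{\varphi_n}R)_\fp=\depth R_\fp$. Hence the $(S_2)$ condition on $R$ transfers directly to $\up{\varphi_n}R$, giving $\depth(\up{\varphi_n}R)_\fp\geq 2$ at every prime with $\depth R_\fp\geq 2$. With both (a) and (b) verified, the Serre reflexivity criterion closes the argument. I expect the main obstacle to be the careful bookkeeping of the depth comparison in step three — verifying that the Frobenius-twisted module really inherits $(S_2)$ from $R$ — and pinning down the precise form of the reflexivity criterion so that the two local conditions suffice; once those are in place the rest is formal.
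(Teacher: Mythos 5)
Your argument is correct in substance and is essentially the route the paper intends: the corollary is stated without an explicit proof, resting on the Evans--Griffith theory quoted just above it (over a quasi-normal ring, a finitely generated module satisfying $(S_2)$ is a second syzygy, hence reflexive), and your proof is exactly the hand-unwound version of that fact via the local criterion --- reflexivity at primes with $\depth R_\fp\leq 1$ plus $\depth(\up{\varphi_n}R)_\fp\geq 2$ elsewhere --- which the paper itself already deployed in the complete-intersection corollary. Your verification of condition (b) is fine: restriction of scalars along $\varphi_n$ preserves depth, so $\up{\varphi_n}R$ inherits $(S_2)$ from $R$. The one step you should tighten is the claim that self-duality of $\up{\varphi_n}(R_\fp)$ ``in particular'' gives reflexivity: for a general module $M$, $M\cong M^{\ast}$ does not formally imply $M\cong M^{\ast\ast}$ (the biduality map of a dual is split injective but need not be onto), so the implication as stated is a non sequitur. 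It is, however, immediately repairable without Goto's theorem: at a prime with $\depth R_\fp\leq 1$ the $(S_2)$ hypothesis forces $\Ht(\fp)\leq 1$, so by $(G_1)$ the ring $R_\fp$ is Gorenstein of dimension at most one, and $\up{\varphi_n}(R_\fp)$ is maximal Cohen--Macaulay over it; maximal Cohen--Macaulay modules over Gorenstein local rings are totally reflexive (Auslander--Bridger), which gives condition (a) directly. With that substitution the proof is complete and matches the intended argument.
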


\begin{example}
	There are several situations, e.g. when betti numbers are strictly increasing,  for which $\up{\varphi_n}R$ is not reflexive.
\end{example}

\begin{proof}
We are going to use \cite{ram}, where Ramras introduced  a class of rings coined with $BNSI$.
Over this class of rings any reflexive
	module is free. Now recall this class included in rings of depth zero. Thanks to Kunz \cite{Ku}, it is enough to take any $BNSI$ ring of prime characteristic, which is not a field. To be more explicit, just let $R:=\bar{\mathbb{F}_p}[[X,Y]]/(X,Y)^2$, and follow some lines from \cite{moh}.
	\end{proof}

Directed limit of Matlis-reflexive modules is every thing, but not inverse limit:
\begin{lemma}\label{t3}Suppose that $(R,\fm)$ is local and complete. Let $\{E_i\}$ be an inverse system of Matlis-reflexive modules. The following assertions are valid:
\begin{itemize}
	\item[$(i)$]  $\vil (E_i^{ \vee}) \cong [\vil (E_i^{ \vee})]^{\vee\vee}$  iff $\vil (E_i^{ \vee})\cong (\vpl E_i)^{ \vee}$.

	\item[$(ii)$] One has  $\vpl  E_i\cong(\vpl  E_i)^{\vee\vee}$. \end{itemize}
\end{lemma}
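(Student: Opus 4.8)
The plan is to exploit the fact that over a complete local ring the Matlis duality functor $(-)^\vee = \Hom_R(-,E(k))$ is exact and inverts the roles of direct and inverse limits. Concretely, the two facts I would lean on are: first, that $(-)^\vee$ sends direct limits to inverse limits, i.e. $(\vil E_i^\vee)^\vee \cong \vpl (E_i^{\vee\vee})$; and second, that each $E_i$ is Matlis-reflexive by hypothesis, so $E_i^{\vee\vee}\cong E_i$. Combining these should give $(\vil E_i^\vee)^\vee \cong \vpl E_i$, which is the natural comparison map whose behavior governs both parts.

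\textbf{Part (ii).} For the second assertion I would argue that $\vpl E_i$ is \emph{automatically} Matlis-reflexive. The key input is a theorem in the style of Enochs: the class of Matlis-reflexive modules over a complete local ring is closed under the relevant limit operations, or more directly, that $\vpl E_i \cong (\vil E_i^\vee)^\vee$ is a Matlis dual of \emph{some} module and every module of the form $L^\vee$ is Matlis-reflexive (since $L^{\vee\vee\vee}\cong L^\vee$ always holds, as the natural map $L\to L^{\vee\vee}$ dualizes to a split surjection with section). Thus applying $(-)^{\vee\vee}$ to $\vpl E_i = (\vil E_i^\vee)^\vee$ and using triple-dual reflexivity on the right gives $(\vpl E_i)^{\vee\vee}\cong \vpl E_i$. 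This part I expect to be clean once the duality-limit interchange is set up.

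\textbf{Part (i).} For the first assertion I would write $L:=\vil E_i^\vee$ and read the statement as: $L$ is Matlis-reflexive iff $L\cong (\vpl E_i)^\vee$. The forward direction I would get from the interchange isomorphism $(\vil E_i^\vee)^\vee\cong\vpl E_i$ together with Matlis-reflexivity of each $E_i$; dualizing $(\vpl E_i)^\vee\cong (\vil E_i^\vee)^{\vee\vee}\cong L$, so if $L$ is reflexive the two sides agree. For the converse, if $L\cong(\vpl E_i)^\vee$ then $L$ is a Matlis dual, hence $L\cong L^{\vee\vee}$ by the triple-dual identity invoked above, giving reflexivity. The main obstacle here is justifying the interchange of Matlis duality with the inverse limit of the reflexive system, i.e. verifying $(\vil E_i^\vee)^\vee\cong\vpl (E_i^{\vee\vee})\cong\vpl E_i$ rigorously; the first isomorphism is the general adjunction between $\Hom$ out of a direct limit and an inverse limit of $\Hom$'s, which is unconditional, so the real content is only that each $E_i^{\vee\vee}\cong E_i$, supplied by hypothesis. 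I would therefore spend the bulk of the argument making the adjunction isomorphism natural in $i$ so that it passes to the limit, and leave the triple-dual identity $L^\vee\cong L^{\vee\vee\vee}$ as the recurring elementary tool underlying both parts.
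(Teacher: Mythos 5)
Your reduction is the same one the paper uses: dualize the direct system, apply the adjunction $\Hom(\vil M_j,E)\cong\vpl\Hom(M_j,E)$ together with $E_i^{\vee\vee}\cong E_i$ to identify $\vpl E_i$ with $(\vil E_i^{\vee})^{\vee}$, and then invoke a triple-dual identity. Part (i) is fine; in fact, once you have $(\vpl E_i)^{\vee}\cong(\vil E_i^{\vee})^{\vee\vee}$ the equivalence in (i) is essentially a tautology, and you do not need the triple-dual identity for the converse at all.

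The genuine gap is in part (ii), in the claim that every module of the form $L^{\vee}$ is Matlis-reflexive because ``$L^{\vee\vee\vee}\cong L^{\vee}$ always holds.'' The split-surjection argument you cite only shows that the natural map $\sigma_{L^{\vee}}\colon L^{\vee}\to L^{\vee\vee\vee}$ is a split monomorphism whose complement is $(\coker\sigma_L)^{\vee}$; it is an isomorphism precisely when $\sigma_L\colon L\to L^{\vee\vee}$ is surjective, which fails for $L=R^{(\mathbb{N})}$. Concretely, taking $E_j=E(k)^j$ with the projections (each Artinian, hence Matlis-reflexive) gives $\vpl E_j\cong\prod_{\mathbb{N}}E(k)$, whose socle $\prod_{\mathbb{N}}k$ is infinite-dimensional; by Enochs' criterion (Matlis-reflexive over a complete local ring means an extension of a finitely generated module by an Artinian one) this module is not reflexive. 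Since any module arises as a direct limit of Matlis-reflexive (e.g.\ finitely generated) modules, the assertion that $B^{\vee}$ is reflexive for every such $B$ — which is what (ii) amounts to — cannot be established this way. For comparison, the paper's own proof of (ii) conceals the same assumption in the identification $\Hom(\Hom(B,E),E)\cong B\otimes\Hom(E,E)$, which is valid only for $B$ finitely presented; so you have reproduced the intended argument faithfully, but the key identity needs either a finiteness hypothesis on the system or an appeal to Enochs' characterization to be salvaged.
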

\begin{proof} i) First, suppose $\vil (E_i^{ \vee})$ is Matlis-reflexive. Let $A:=\vil (E_i^{ \vee})$. 
Recall that: $$A=A^{\vee\vee}=(\vil (E_i^{ \vee}))^{\vee\vee}=[\Hom(\vil  E_i^{ \vee},E)]^{ \vee}=[\vpl\Hom(E_i^{ \vee},E)]^{ \vee}=[\vpl E_i]^{ \vee}.$$
The reverse implication also is easy.

ii)
Set $A:=\vpl  E_i$, let $M_i:=E_i^{ \vee}$, and $B:=\vil  M_i$.  Then
$E_i=M_i^{ \vee}$ and 
$$A=\vpl E_i=\vpl\Hom(M_i,E) =\Hom(\vil M_i,E)=B ^{ \vee},$$
and so
$A^{\vee\vee}=B^{\vee\vee\vee}=
[\Hom(\Hom(B,E),E)]^\vee=[B\otimes\Hom(E,E)]^\vee
 =(B\otimes \hat{R})^\vee= B ^{ \vee}=A.$
\end{proof}

\begin{lemma}\label{t2}
	Let   $R$ be a Cohen-Macaulay ring and of prime characteristic with canonical module. Let
	$N$ be maximal Cohen-Macaulay and locally free over the punctured spectrum. Then
$\Tor^R_i(\mathcal{M},N)\cong\Ext^{d+i}_R(\mathcal{M},N^\dagger)^{ \vee}$ for all Matlis-reflexive module $\mathcal{M}$.
\end{lemma}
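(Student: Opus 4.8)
The plan is to reduce everything to the finitely generated and the Artinian cases and then to glue them by dévissage, exactly as Matlis-reflexive modules are themselves glued from these two types. First I would pass to the completion of $R$, so that $(-)^{\vee\vee}=\operatorname{id}$ on every Matlis-reflexive module and $N^{\dagger\dagger}\cong N$; I would also note that $N^\dagger$ is again maximal Cohen-Macaulay and locally free over the punctured spectrum, since $\dagger$ commutes with localization and preserves these properties by \cite[Theorem 3.3.10]{BH}. The backbone is the identity already exploited in Propositions \ref{c} and \ref{d}: applying the Grothendieck spectral sequence $\Ext^p_R(\mathcal M,H^q_{\fm}(N^\dagger))\Rightarrow H^{p+q}_{\fm}(\mathcal M,N^\dagger)$ to the maximal Cohen-Macaulay module $N^\dagger$ makes it collapse (only $q=d$ survives), giving $H^{d+i}_{\fm}(\mathcal M,N^\dagger)\cong\Ext^i_R(\mathcal M,A)$ with $A:=H^d_{\fm}(N^\dagger)$. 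Local duality identifies $A^\vee\cong N^{\dagger\dagger}\cong N$, and tensor--hom adjunction against the injective $E(k)$ converts $\Ext$ into $\Tor$, so that $H^{d+i}_{\fm}(\mathcal M,N^\dagger)^\vee\cong\Tor^R_i(\mathcal M,N)$. The remaining task is to replace $H^{d+i}_{\fm}(\mathcal M,N^\dagger)$ by $\Ext^{d+i}_R(\mathcal M,N^\dagger)$.

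For finitely generated $\mathcal M$ this replacement is precisely \cite[Lemma 3.5(2)]{ta} applied to $N^\dagger$ (legitimate since $N^\dagger$ is locally free over the punctured spectrum), which yields $\Tor^R_i(\mathcal M,N)\cong\Ext^{d+i}_R(\mathcal M,N^\dagger)^\vee$ at once. For the Artinian case I would argue by Matlis duality alone: writing $N^\dagger\cong H^d_{\fm}(N)^\vee$ via local duality, the adjunction isomorphism $\Ext^{d+i}_R(B,H^d_{\fm}(N)^\vee)\cong\Tor^R_{d+i}(B,H^d_{\fm}(N))^\vee$ (valid for every module $B$ because $E(k)$ is injective) gives $\Ext^{d+i}_R(B,N^\dagger)^\vee\cong\Tor^R_{d+i}(B,H^d_{\fm}(N))$. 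Here I would use that $N$ is maximal Cohen-Macaulay to see $H^d_{\fm}(N)\cong H^d_{\fm}(R)\otimes_R N$ with every $\Tor^R_{>0}(H^d_{\fm}(R),N)$ vanishing, so the base-change spectral sequence $\Tor^R_p(\Tor^R_q(B,N),H^d_{\fm}(R))\Rightarrow\Tor^R_{p+q}(B,H^d_{\fm}(N))$ is available; since the Čech complex is a length-$d$ flat resolution of $H^d_{\fm}(R)$ one has $\Tor^R_p(X,H^d_{\fm}(R))\cong H^{d-p}_{\fm}(X)$, and for the $\fm$-torsion module $X=\Tor^R_q(B,N)$ this equals $\Tor^R_q(B,N)$ when $p=d$ and $0$ otherwise. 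The spectral sequence then collapses to the single shifted term $\Tor^R_{d+i}(B,H^d_{\fm}(N))\cong\Tor^R_i(B,N)$, settling the Artinian case.

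Finally I would glue. Over the complete ring every Matlis-reflexive $\mathcal M$ sits in a short exact sequence $0\lo K\lo\mathcal M\lo B\lo 0$ with $K$ finitely generated and $B$ Artinian (the Matlis--Enochs description of reflexive modules). Both $\Tor^R_i(-,N)$ and $\Ext^{d+i}_R(-,N^\dagger)^\vee$ are (co)homological $\delta$-functors in the first argument, the latter because $(-)^\vee$ is exact, and matching the connecting maps shows that the two long exact sequences run in the same direction. Given a natural comparison transformation that is an isomorphism on the finitely generated term $K$ and on the Artinian term $B$, the five lemma forces it to be an isomorphism on $\mathcal M$. The main obstacle I anticipate is exactly this last step: producing a comparison map that is natural on \emph{all} modules, not merely an abstract isomorphism on each class, together with the bookkeeping that guarantees the relevant $\Tor^R_i(\mathcal M,N)$ lie in the reflexive class so that $(-)^{\vee\vee}$ can be cancelled; the degree shift in the Artinian computation and the interaction of $\dagger$ with $\vee$ are where the delicate points concentrate.
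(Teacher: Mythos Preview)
Your treatment of the finitely generated case coincides with the paper's: both invoke \cite[Lemma 3.5(2)]{ta}. For the passage to arbitrary Matlis-reflexive $\mathcal M$, however, the two routes diverge genuinely.

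The paper does not use the Enochs short exact sequence at all. Instead it writes $\mathcal M=\varinjlim_j M_j$ as a filtered colimit of finitely generated modules and pushes the isomorphism through the limit in a single chain:
\[
\Tor^R_i(\mathcal M,N)\;\cong\;\varinjlim\Tor^R_i(M_j,N)\;\cong\;\varinjlim\bigl[\Ext^{d+i}_R(M_j,N^\dagger)^\vee\bigr]\;\cong\;\bigl[\varprojlim\Ext^{d+i}_R(M_j,N^\dagger)\bigr]^\vee\;\cong\;\Ext^{d+i}_R(\mathcal M,N^\dagger)^\vee.
\]
The only nontrivial step is the third isomorphism, swapping $\varinjlim$ past $(-)^\vee$; this is precisely what Lemma~\ref{t3}(i) was set up for, once one notes that $\Tor^R_i(\mathcal M,N)$ is again Matlis-reflexive (it is a subquotient of a finite direct sum of copies of $\mathcal M$, coming from a finite free resolution of $N$). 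The last isomorphism uses that $\Hom$ converts colimits in the first variable to limits.

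This sidesteps exactly the obstacle you flag: there is no need to manufacture a natural transformation valid on the whole module category, no separate Artinian computation, and no five-lemma gluing, because each step in the chain is already a natural isomorphism of functors in $\mathcal M$. Your route via the \v{C}ech spectral sequence for the Artinian piece is plausible and would probably go through, but it is longer, and the naturality of the comparison across the Enochs sequence (together with the bookkeeping of $(-)^{\vee\vee}$ on $\Tor^R_{d+i}(B,H^d_{\fm}(N))$, which is not a priori Matlis-reflexive) is real extra work. The paper's direct-limit argument buys precisely this: the Matlis-reflexivity hypothesis on $\mathcal M$ enters only once, to justify the single interchange $\varinjlim(-)^\vee\cong(\varprojlim-)^\vee$ via Lemma~\ref{t3}.
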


\begin{proof}
Step I) First, assume $\mathcal{M}:=M$ is finitely generated.	A slight modification of the proof of \cite[Lemma 3.5(2)]{ta} yields the desired claim.

 Step II) Write
$\mathcal{M}=\vil_{j\in J} M_j$ where $M_j$ is finitely generated. Recall that $\Tor^R_i(\mathcal{M},N)$
is Matlis-reflexive.  We use this (resp. step I) along with \ref{t3} in the third (resp. second)  identification of: 
\begin{eqnarray*}
\Tor^R_i(\mathcal{M},N)&\cong&\vil_{} \Tor^R_i(M_j,N)\\&  \cong & \vil[\Ext^{d+i}_R(M_j,N^\dagger)^{ \vee}]
	\\
	& \cong &  [\vpl\Ext^{d+i}_R(M_j,N^\dagger)]^{ \vee}\\
	& \cong & \Ext^{d+i}_R(\vil M_j,N^\dagger)^{ \vee}\\ &=& \Ext^{d+i}_R(\mathcal{M},N^\dagger)^{ \vee},
\end{eqnarray*}yields the desired claim.
\end{proof}
\begin{question}\label{q2}
  Let $R$ be  Cohen-Macaulay  and of prime characteristic with canonical module and $n\gg 0$.\begin{itemize}
		\item[$(1)$] (See \cite[Page 167]{mar}) Suppose $\Ext^{j}_R(\up{\varphi_n}R,R)=0$ for some $j>\dim R$.  Is $R$ Gorenstein? 
		\item[$(2) $]  (See \cite[2.12]{l}) Suppose $\Tor^R_i(\up{\varphi_n}R,\omega_R)=0$ for some $i>0$. Is $R$ Gorenstein?
		
	\end{itemize}
\end{question}

When is $(\up{\varphi_n}R)^{\vee\vee}=\up{\varphi_n}R$? For instantiate,
$R$ is F-finite.
\begin{proposition}Suppose 
	 $R$ is F-finite. Then 
Question \ref{q2}(1) $\equiv$ Question \ref{q2}(2).
\end{proposition}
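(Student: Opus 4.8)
The plan is to prove the equivalence by exhibiting a duality that converts the Ext-vanishing in Question \ref{q2}(1) into the Tor-vanishing in Question \ref{q2}(2), using the F-finiteness hypothesis to guarantee that $\up{\varphi_n}R$ is a finitely generated (hence Matlis-reflexive over the complete ring) module. First I would reduce to the complete case, so that Matlis duality is available and $\up{\varphi_n}R$ becomes Matlis-reflexive; since F-finiteness is what forces $\up{\varphi_n}R$ to be finitely generated, this is exactly the place the hypothesis $(\up{\varphi_n}R)^{\vee\vee}=\up{\varphi_n}R$ mentioned just before the statement gets used.

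The core step is to invoke \cref{t2} with $\mathcal{M}:=\up{\varphi_n}R$ and $N:=\omega_R$, which is maximal Cohen-Macaulay and (being the canonical module) locally free on the punctured spectrum where $R$ is Gorenstein; more robustly, I would instead take $N$ to be a maximal Cohen-Macaulay module whose dual $N^\dagger$ recovers the relevant object. Concretely, setting $N:=\omega_R$ gives $N^\dagger=\Hom(\omega_R,\omega_R)=R$, so \cref{t2} yields
$$\Tor^R_i(\up{\varphi_n}R,\omega_R)\cong\Ext^{d+i}_R(\up{\varphi_n}R,R)^{\vee}$$
for all $i$. Reading off the index $j=d+i$, this isomorphism shows that $\Tor^R_i(\up{\varphi_n}R,\omega_R)=0$ for some $i>0$ if and only if $\Ext^{j}_R(\up{\varphi_n}R,R)=0$ for the corresponding $j=d+i>\dim R$. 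Since $(-)^{\vee}$ is a faithful exact functor on finitely generated (indeed Matlis-reflexive) modules, vanishing transfers across the isomorphism in both directions, and the correspondence $i\leftrightarrow j=d+i$ is a bijection between the ranges $i>0$ and $j>\dim R$. This immediately identifies the hypotheses of the two questions, so an affirmative (or negative) answer to one is an affirmative (or negative) answer to the other, which is the asserted equivalence.

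The main obstacle is checking that \cref{t2} genuinely applies to $N=\omega_R$ (or whichever MCM locally-free-on-the-punctured-spectrum module one selects) and that the canonical identification $N^\dagger\cong R$ holds; one must be a little careful that $\omega_R$ is locally free on the punctured spectrum only where $R$ is Gorenstein, so it may be cleaner to run \cref{t2} with a general such $N$ and then specialize. I would also need to confirm that passing to the completion neither creates nor destroys the Gorenstein conclusion and is compatible with the Frobenius module structure (using $(\up{\varphi_n}R)^{\widehat{}}\cong\up{\varphi_n}(\widehat R)$ in the F-finite setting), but these are standard faithfully-flat descent facts. Everything else is formal bookkeeping with the shift $j=d+i$ and the exactness of Matlis duality.
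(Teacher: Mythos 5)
You have correctly identified the engine of the paper's argument: Lemma \ref{t2} applied with $\mathcal{M}=\up{\varphi_n}R$ and $N=\omega_R$, so that $N^\dagger=\Hom(\omega_R,\omega_R)=R$ and $\Tor^R_i(\up{\varphi_n}R,\omega_R)\cong\Ext^{d+i}_R(\up{\varphi_n}R,R)^{\vee}$, with the index shift $j=d+i$ matching the ranges $i>0$ and $j>\dim R$. But the step you flag as ``the main obstacle'' is a genuine gap, not a technicality: Lemma \ref{t2} requires $N$ to be locally free over the punctured spectrum, and for $N=\omega_R$ this means precisely that $R_{\fp}$ is Gorenstein for every non-maximal prime $\fp$ --- a localized form of the very conclusion the two questions are chasing, so it cannot simply be assumed. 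Your proposed workaround, to ``run Lemma \ref{t2} with a general such $N$ and then specialize,'' does not repair this: the specialization is to $N=\omega_R$, and that is exactly where the hypothesis fails.

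The paper closes this gap by inducting on $d=\dim R$. For $d\le 1$ it shows both questions are \emph{true} outright: $\depth R\le 1$ forces $\up{\varphi_n}R$ to be tor-rigid, so a single Ext- (resp.\ Tor-) vanishing propagates to all higher degrees, and then \cite[Theorem 1.2]{mar} (resp.\ Koh--Lee together with the Auslander--Buchsbaum formula applied to $\omega_R$) yields Gorensteinness. For larger $d$, the vanishing hypothesis of either question localizes at every non-maximal prime; the inductive hypothesis then gives that $R$ is Gorenstein on the punctured spectrum, hence that $\omega_R$ is locally free there, and only at that point does Lemma \ref{t2} deliver the duality isomorphism you want. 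So your duality is the right one, but it only becomes available after the localization-plus-induction step that your write-up omits; without it the argument does not get off the ground.
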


\begin{proof}
	The proof is by induction on $d:=\dim R$.
First, assume $d\leq 1$. We claim both questions are true. Let us deal with  Question \ref{q2}(1).
Indeed, as depth of the ring is at most one,  $\up{\varphi_n}R$ is tor-rigid.
	From this, $\Ext^{>d}_R(\up{\varphi_n}R,R)=0$.
	It follows  from \cite[Theorem 1.2]{mar} that $R$  is Gorenstein.  We should show that  Question \ref{q2}(2) is true in this case, as well.
	Indeed, as  $\up{\varphi_n}R$ is tor-rigid, $\Tor^R_{>i}(\up{\varphi_n}R,\omega_R)=0$.
	It follows  from Koh-Lee that $\pd(\omega_R)<\infty$. Following Auslander-Buchsbaum formula, $\omega_R$  is free, and consequently, the ring is Gorenstein.
	Suppose the claim is true in rings of dimension less that $d$.  
	We now want to verify the inductive hypothesis, and drive an additional property. The assumptions behave  well with respect to the localization. In view of the inductive hypothesis, $R$ is Gorenstein over punctured spectrum. So, $\omega_R$ is locally free
	over the punctured spectrum. This allows us to apply Lemma \ref{t2} to see $\Tor^R_i(\up{\varphi_n}R,\omega_R)\cong\Ext^{d+i}_R(\up{\varphi_n}R,R)^{ \vee}.$
	The proof is now completed.
\end{proof}

It may be worth to note that the ring in \cite[Page 167]{mar} was not assumed Cohen-Macaulay. Also, the restriction $j>\dim R$ is superfical at least the ring is $F$-finite, see \cite[Corollary 3.5]{mar}.
So, one may reformulate  Question \ref{q2}(1)  as follows: 
\begin{question}\label{q12}
	 Suppose $\Ext^{j}_R(\up{\varphi}R,R)=0$ for some $j>0$.  Is $R$ Gorenstein? 
\end{question}

It seems Question \ref{q12} is not true, and so  negative answer to \cite[Conjecture 7.1]{Mi}.

\begin{fact}(See  \cite[Theorem 3.8]{ev})
	\label{s3} i ) Let $k>2$.
Let  $R$  be a local
ring which satisfies $(S_k)$, and be $(G_1)$.  The module $M$ satisfies $(S_k)$ iff $M$ is reflexive and $\Ext^i_R(M^*,R) = 0$ for $1 \leq i \leq k - 2$.

ii) (See \cite[Kollar 5]{hhe} and \cite{gotoo})
Let  $R$  be UFD and of prime characteristic. Then $\up{\varphi}R$ is self-dual.
\end{fact}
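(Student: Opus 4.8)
Both assertions are standard and I would ultimately lean on the cited sources, but here is how I would reconstruct them. For part (i) the plan is to route everything through Auslander's transpose. Starting from a finite presentation $F_1\to F_0\to M\to 0$, the fundamental four-term sequence
$$0\lo \Ext^1_R(\Tr M,R)\lo M\lo M^{\ast\ast}\lo \Ext^2_R(\Tr M,R)\lo 0$$
shows that $M$ is reflexive exactly when $\Ext^1_R(\Tr M,R)=\Ext^2_R(\Tr M,R)=0$. Dualizing the presentation yields the exact sequence $0\to M^\ast\to F_0^\ast\to F_1^\ast\to \Tr M\to 0$, which exhibits $M^\ast$ as a second syzygy of $\Tr M$; dimension shifting then gives $\Ext^i_R(\Tr M,R)\cong \Ext^{i-2}_R(M^\ast,R)$ for $i\geq 3$. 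Combining the two observations, the right-hand condition ``$M$ reflexive and $\Ext^i_R(M^\ast,R)=0$ for $1\leq i\leq k-2$'' is precisely equivalent to the single statement $\Ext^i_R(\Tr M,R)=0$ for $1\leq i\leq k$, i.e.\ to $M$ being $k$-torsionfree in the sense of Auslander--Bridger.

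By the Auslander--Bridger theory, $k$-torsionfreeness is the same as $M$ being a $k$-th syzygy module. The remaining and genuinely nontrivial input is that, over a ring satisfying $(S_k)$ and $(G_1)$, a finitely generated module is a $k$-th syzygy exactly when it itself satisfies $(S_k)$; this is where the $(G_1)$ hypothesis is used, to control the codimension-one behaviour so that reflexivity is detected by depth. I would cite \cite[Theorem 3.8]{ev} for this last equivalence rather than reprove it, since it is exactly its content.

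For part (ii) I would argue through the divisor class group (F-finiteness, in force in this section, guarantees that $\up{\varphi}R$ is finitely generated, so that self-duality among finitely generated modules is meaningful). Since $R$ is a UFD it is a normal domain with trivial class group $\operatorname{Cl}(R)=0$, and $\up{\varphi}R$ is a reflexive $R$-module. Grothendieck duality for the finite Frobenius morphism identifies $(\up{\varphi}R)^\ast=\Hom_R(\up{\varphi}R,R)\cong \up{\varphi}(R((1-p)K_R))$, the Frobenius pushforward of the divisorial ideal whose class in $\operatorname{Cl}(R)$ is $(1-p)[K_R]$. Because $\operatorname{Cl}(R)=0$ this class vanishes, the divisorial ideal is isomorphic to $R$, and hence $(\up{\varphi}R)^\ast\cong\up{\varphi}R$. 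This recovers \cite[Kollar 5]{hhe} and \cite{gotoo}; in the Cohen--Macaulay case it degenerates to the observation that $\omega_R$ is free, so $R$ is Gorenstein and the self-duality drops out of the canonical-module duality used in Proposition \ref{df}.

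The main obstacle in (i) is the last step: the equivalence between being a $k$-th syzygy and satisfying $(S_k)$ is not formal and genuinely requires the combination $(S_k)+(G_1)$. In (ii) the delicate point is making the duality identification precise without a Cohen--Macaulay hypothesis, i.e.\ justifying the appearance of the class $(1-p)[K_R]$ purely at the level of reflexive modules over the normal domain $R$, which is why I would defer to the references for the full argument.
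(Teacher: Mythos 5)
The paper offers no proof of this statement: it is recorded as a Fact and attributed entirely to \cite[Theorem 3.8]{ev} and to \cite{hhe}, \cite{gotoo}, so there is nothing internal to compare your argument against. Your reconstruction is sound and consistent with those sources. For (i), translating the right-hand condition into $k$-torsionfreeness of $M$ via the Auslander--Bridger sequence and dimension shifting along $0\to M^\ast\to F_0^\ast\to F_1^\ast\to\Tr M\to 0$ is the standard route, and you correctly isolate the one nontrivial input --- the equivalence of ``$k$-th syzygy'' with $(S_k)$ over an $(S_k)+(G_1)$ ring --- as exactly the content of the cited theorem. For (ii), your divisor-class argument works, but identifying the class as $(1-p)[K_R]$ is more than is needed and tacitly presupposes a canonical divisor, which the Fact does not assume (the ring is not assumed Cohen--Macaulay; indeed the paper applies the Fact to a non-Cohen--Macaulay UFD in Discussion \ref{ff}). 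The cleaner packaging, closer to Herzog's original Korollar 5, is: $\Hom_R(\up{\varphi}R,R)$ is a finitely generated reflexive module of rank one over the ring $\up{\varphi}R\cong R$ (reflexivity because it is a dual, hence $(S_2)$ and torsion-free over a normal domain; rank one by computing at the generic point), hence free over $\up{\varphi}R$ because every rank-one reflexive module over a UFD is principal; this gives $\Hom_R(\up{\varphi}R,R)\cong\up{\varphi}R$ as $\up{\varphi}R$-modules, a fortiori as $R$-modules, with no appeal to $K_R$. Your remark that F-finiteness is needed for the self-duality statement to concern finitely generated modules is well taken and is indeed in force wherever the paper invokes this Fact.
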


\begin{discussion}\label{ff}
There is a UFD local ring $R$ of prime characteristic with prefect residue field so that it  	is $(S_3)$ but not Cohen-Macaulay.
\end{discussion}
\begin{proof}Let $A$ be any complete local ring of prime characteristic with prefect residue field
so that it $(S_3)$ but not Cohen-Macaulay.
Following the work of Heitmann \cite{heit}
there is a UFD local ring $R$ such that
$\hat{R}=A$. Following \cite[Proposition 2.1.16]{BH} $R$ satisfies $(S_3)$ but it is 
not Cohen-Macaulay. \end{proof}

\begin{proposition}
Suppose in addition to Discussion \ref{ff}, the ring is F-finite. Then Question \ref{q12} is not true.
\end{proposition}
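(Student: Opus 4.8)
The plan is to use the ring $R$ produced by Discussion \ref{ff}, now with the added F-finiteness, as an explicit counterexample: I will show that $\Ext^1_R(\up{\varphi}R,R)=0$, so that the hypothesis of Question \ref{q12} is satisfied with $j=1>0$, while $R$ is \emph{not} Gorenstein for the trivial reason that it is not even Cohen-Macaulay. The whole argument is a packaging of the two parts of Fact \ref{s3}, so the work is in checking that their hypotheses are met by $\up{\varphi}R$.

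First I would record the structural data of $R$. It is a UFD of prime characteristic, hence normal, so it satisfies $(R_1)$ and $(S_2)$; its localizations in codimension $\leq 1$ are regular and therefore Gorenstein, so $R$ is $(G_1)$. By construction $R$ satisfies $(S_3)$ but fails to be Cohen-Macaulay, and F-finiteness guarantees that $\up{\varphi}R$ is a finitely generated $R$-module, so the module-theoretic statements of Fact \ref{s3} apply to it. Next I would verify that $\up{\varphi}R$ itself satisfies Serre's condition $(S_3)$ as an $R$-module. Using $(\up{\varphi}R)_\fp=\up{\varphi}(R_\fp)$ together with the fact that restriction of scalars along the Frobenius preserves depth (an element $x$ is a nonzerodivisor, and more generally a regular sequence, on $\up{\varphi}R$ exactly when $x^p$, equivalently $x$, is one on $R$), one obtains $\depth_{R_\fp}\big((\up{\varphi}R)_\fp\big)=\depth(R_\fp)$ and $\dim_{R_\fp}(\up{\varphi}R)_\fp=\dim R_\fp$ at every prime $\fp$. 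Hence $\up{\varphi}R$ satisfies $(S_3)$ precisely because $R$ does.

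With these checks in hand I would invoke Fact \ref{s3}(i) with $k=3$ and $M=\up{\varphi}R$: since $R$ is $(S_3)$ and $(G_1)$ and $M$ satisfies $(S_3)$, the ``only if'' direction forces $M$ to be reflexive and $\Ext^1_R(M^*,R)=0$. Because $R$ is a UFD of prime characteristic, Fact \ref{s3}(ii) gives the self-duality $(\up{\varphi}R)^*\cong\up{\varphi}R$. Substituting then yields $\Ext^1_R(\up{\varphi}R,R)=0$, which is the hypothesis of Question \ref{q12} with $j=1$, while $R$ is not Gorenstein. This contradicts an affirmative answer, so Question \ref{q12} is not true.

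The main obstacle I anticipate is the transfer of Serre's condition $(S_3)$ from $R$ to the Frobenius pushforward $\up{\varphi}R$, i.e.\ the localized depth computation $\depth_{R_\fp}\big((\up{\varphi}R)_\fp\big)=\depth(R_\fp)$, since once this is established everything else is a direct citation of Fact \ref{s3} and the self-duality. A secondary point worth confirming is that an F-finite instance of the Heitmann-type ring of Discussion \ref{ff} actually exists (the perfect residue field assumption is consistent with, and indeed necessary for, F-finiteness), so that the hypothesis ``in addition to Discussion \ref{ff}, the ring is F-finite'' is not vacuous.
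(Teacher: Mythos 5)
Your proposal is correct and follows essentially the same route as the paper: both verify that $\up{\varphi}R$ inherits $(S_3)$ via localization/depth-preservation under Frobenius, then combine Fact \ref{s3}(i) with $k=3$ and the UFD self-duality of Fact \ref{s3}(ii) to conclude $\Ext^1_R(\up{\varphi}R,R)=0$ while $R$ fails to be Gorenstein for lack of Cohen-Macaulayness. Your write-up merely makes explicit the $(G_1)$ check (normality of the UFD) and the depth computation that the paper leaves implicit.
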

\begin{proof}
Since Frobenius behaves well with respect to localization, $\up{\varphi}R$ satisfies $(S_3)$.     We apply Fact \ref{s3}(i) for $k:=3$ along with Fact \ref{s3}(ii) to observe that
$\Ext^1_R(\up{\varphi}R,R) = 0$ but $R$ is not Gorenstein.\end{proof}

The following reduces checking of d-spots vanishing of $\Ext^{i}_R(\up{\varphi}R,R)$ into at most 3-spots vanishing:
\begin{observation}
Suppose $R$ is Cohen-Macaulay, $F$-finite and of dimension $d$. The following are equivalent:\begin{itemize}
	\item[$(1)$]   $R$ is Gorenstein. 
	\item[$(2) $]  $\up{\varphi}R$ is self-dual, $\Ext^{i}_R(\up{\varphi}R,R) = 0$ for $0<i\in\{1,d-1, d\}$.
\end{itemize}Suppose in addition, $R$ is quasi-normal. Then these are equivalent with:\begin{itemize}
\item[$(3)$]   $\up{\varphi}R$ is self-dual and $\Ext^{i}_R(\up{\varphi}R,R) = 0$ for  $0<i\in\{d-1, d\}$.
\end{itemize}
\end{observation}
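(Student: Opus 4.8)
The plan is to prove the two implications $(1)\Rightarrow(2)$ and $(1)\Rightarrow(3)$ directly, and then to derive the two converses $(2)\Rightarrow(1)$ and $(3)\Rightarrow(1)$, reducing the first of these to the second. Throughout I would pass to the completion and use the standing fact that, since $R$ is Cohen-Macaulay and the underlying ring of $\up{\varphi}R$ is again $R$, the module $\up{\varphi}R$ is maximal Cohen-Macaulay, hence reflexive once the ring is $(S_2)$. For the forward directions: if $R$ is Gorenstein then $\up{\varphi}R$ is self-dual by \cite[1.1]{gotoo}, and since over a Gorenstein ring every maximal Cohen-Macaulay module $N$ satisfies $\Ext^i_R(N,R)=0$ for all $i>0$, in particular $\Ext^i_R(\up{\varphi}R,R)=0$ for $i\in\{1,d-1,d\}$ and for $i\in\{d-1,d\}$. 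Thus all the content lies in the reverse directions.

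For $(3)\Rightarrow(1)$ I assume $R$ quasi-normal, i.e. $(S_2)+(G_1)$; together with the Cohen-Macaulay hypothesis this makes $R$ satisfy $(S_k)$ for every $k$ while retaining $(G_1)$, and the maximal Cohen-Macaulay module $\up{\varphi}R$ satisfies $(S_k)$ for every $k$ as well. I would feed this into Fact \ref{s3}(i): applied to $M=\up{\varphi}R$ it yields that $\up{\varphi}R$ is reflexive and that $\Ext^i_R((\up{\varphi}R)^\ast,R)=0$ in the intermediate range $1\le i\le d-2$. The self-duality $(\up{\varphi}R)^\ast\cong\up{\varphi}R$ then transports this to $\Ext^i_R(\up{\varphi}R,R)=0$ for $1\le i\le d-2$, and combining with the two hypothesized vanishings at $i=d-1$ and $i=d$ gives $\Ext^i_R(\up{\varphi}R,R)=0$ for all $1\le i\le d$. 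The remaining step is to upgrade this finite band into the vanishing of $\Ext^i_R(\up{\varphi}R,R)$ in the range needed by the Frobenius–Gorenstein criterion of \cite[Theorem 1.2]{mar} (see also \cite[Corollary 3.5]{mar}), which then forces $R$ to be Gorenstein. The small-dimensional cases $d\le 1$ are immediate, since there $(G_1)$ already means $R_\fp$ is Gorenstein at every prime, hence $R$ is Gorenstein; the case $d=2$ I would treat by applying Fact \ref{s3}(i) with $k=3$ together with the assumed vanishing at $i\in\{1,2\}$.

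For $(2)\Rightarrow(1)$ I would run an induction on $d$, using the extra hypothesis $\Ext^1_R(\up{\varphi}R,R)=0$ precisely to manufacture the $(G_1)$ condition that $(3)$ requires. The base cases $d\le 1$ are settled by hand: when $\depth R\le 1$ the module $\up{\varphi}R$ is tor-rigid, so a single top vanishing propagates and one concludes via \cite[1.4]{two} and Corollary \ref{dz} in the depth-zero case. For $d\ge 2$, all hypotheses localize well: for a prime $\fp$ one has $(\up{\varphi}R)_\fp\cong\up{\varphi}(R_\fp)$, the self-duality localizes, and $\Ext^1_R(\up{\varphi}R,R)_\fp\cong\Ext^1_{R_\fp}(\up{\varphi}(R_\fp),R_\fp)$. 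Hence for each $\fp$ with $\Ht\fp\le 1$ the ring $R_\fp$ is $F$-finite, Cohen-Macaulay of dimension $\le1$, with $\up{\varphi}(R_\fp)$ self-dual and $\Ext^1_{R_\fp}(\up{\varphi}(R_\fp),R_\fp)=0$; note that for such $\fp$ the list $\{1,\Ht\fp-1,\Ht\fp\}_{>0}$ reduces to $\{1\}$, so these are exactly the hypotheses of the statement in dimension $<d$, and the inductive hypothesis gives that $R_\fp$ is Gorenstein. This yields $(G_1)$, and since $R$ is Cohen-Macaulay it is in particular $(S_2)$, so $R$ is quasi-normal; as the hypotheses of $(2)$ contain those of $(3)$, the already-proved implication $(3)\Rightarrow(1)$ finishes the argument.

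The step I expect to be the main obstacle is the passage from a \emph{finite band} of vanishing $\Ext^i_R(\up{\varphi}R,R)=0$ to the range in which Gorensteinness is actually detected: this is the delicate point underlying Question \ref{q12}, where a single top-degree vanishing is \emph{not} expected to suffice, so one genuinely needs $F$-finiteness and the full strength of \cite[Theorem 1.2]{mar} (or, equivalently, to show $\up{\varphi}R$ is totally reflexive) rather than a bare vanishing statement. A secondary, but essential, technical care-point is that Fact \ref{s3}(i) only produces vanishing of $\Ext^i_R((\up{\varphi}R)^\ast,R)$, so the self-duality must be invoked to convert these into vanishing of $\Ext^i_R(\up{\varphi}R,R)$; keeping track of the admissible range of $k$ in Fact \ref{s3}(i) against the codimension assumption $(G_1)$ is exactly what dictates that the two boundary spots $\{d-1,d\}$ be supplied as hypotheses in $(2)$ and $(3)$.
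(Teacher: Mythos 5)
Your overall architecture coincides with the paper's: handle small dimension first, localize at low-height primes to manufacture the $(G_1)$ condition so that $(2)$ reduces to $(3)$, then in the quasi-normal case feed the $(S_d)$ property of $\up{\varphi}R$ into Fact \ref{s3}(i) and use self-duality to convert $\Ext^i_R((\up{\varphi}R)^\ast,R)=0$ into $\Ext^i_R(\up{\varphi}R,R)=0$ for $1\leq i\leq d-2$, which together with the hypothesized spots $i\in\{d-1,d\}$ fills the whole band $1\leq i\leq d$. Up to that point your argument is sound and matches the intended one.

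The genuine gap is your closing step. Having reached ``$\up{\varphi}R$ is self-dual and $\Ext^i_R(\up{\varphi}R,R)=0$ for all $1\leq i\leq d$,'' you declare that the remaining task is to ``upgrade this finite band'' to the range required by \cite[Theorem 1.2]{mar}, and you flag this as the main unresolved obstacle. No such upgrade is needed, and \cite{mar} is the wrong tool here: the statement you have arrived at is \emph{verbatim} the hypothesis of Goto's criterion \cite[Theorem 1.1]{gotoo}, which says that an F-finite local ring of characteristic $p$ and dimension $d$ is Gorenstein if and only if $\up{\varphi}R$ is self-dual and $\Ext^i_R(\up{\varphi}R,R)=0$ for $1\leq i\leq d$. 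This is also how the paper disposes of the base cases $d\leq 2$ (where the sets $\{1,d-1,d\}_{>0}$, resp.\ $\{d-1,d\}_{>0}$, already exhaust $[1,d]$), and of the localized statements at primes of height $\leq 2$ that give Gorensteinness in low codimension. Your analogy with Question \ref{q12} is misleading: there one has a \emph{single} vanishing spot and no self-duality, which is precisely why that question is delicate; here the self-duality hypothesis is exactly the extra input that Goto's theorem converts, together with the band $[1,d]$, into Gorensteinness. So the proof is repairable by one citation, but as written it does not close.
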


\begin{proof}
	The case $d\leq 2$ follows by a result of Goto, see \cite[Theorem 1.1]{gotoo}. This in turn implies that $R$  is
	Gorenstein in codimension less than or equal to two. So, we may assume $d\geq3$.  Since Frobenius behaves well with respect to localization, $\up{\varphi}R$ satisfies $(S_d)$. Thanks to Fact \ref{s3}(i)
	$\Ext^i_R(\up{\varphi}R,R) = 0$ for all $i<d-1$. It remains to apply \cite[Theorem 1.1]{gotoo}.
\end{proof}

\end{document}